\documentclass[11pt]{article}
\usepackage{amsmath,amsthm,amssymb,amsfonts}
\usepackage{latexsym}
\usepackage{graphicx,psfrag,import}
\usepackage{fullpage}
\usepackage{framed}
\usepackage{verbatim}
\usepackage{color}
\usepackage{epsfig}
\usepackage{epstopdf}
\usepackage{hyperref}
\usepackage{geometry}
\usepackage{mathtools}
\usepackage{enumerate}
\usepackage{multicol}
\usepackage{a4wide}
\usepackage{booktabs}
\usepackage{enumitem}
\usepackage{lineno}
\usepackage{parcolumns}
\usepackage{thmtools}
\usepackage{xr}
\usepackage{epstopdf}
\usepackage{mathrsfs}
\usepackage{subfig}
\usepackage{caption}
\usepackage{comment}
\usepackage{authblk}
\usepackage{setspace}

\geometry{margin=1.3in}

\parskip=0pt plus 1pt

\theoremstyle{plain}
\newtheorem{theorem}{Theorem}[section]

\newtheorem{proposition}[theorem]{Proposition}
\newtheorem{lemma}[theorem]{Lemma}

\theoremstyle{definition}
\newtheorem{definition}[theorem]{Definition}
\newtheorem{example}[theorem]{Example}

\newtheorem{remark}[theorem]{Remark}

\theoremstyle{remark}

\parskip=0pt plus 1pt

\newcommand\RR{\mathbb{R}}
\newcommand\GG{\mathcal{G}}

\newcommand\V{\mathcal{V}}
\newcommand\R{\mathbb{R}}
\newcommand\by{\boldsymbol{y}}
\newcommand\bk{\boldsymbol{k}}
\newcommand\bw{\boldsymbol{w}}

\newcommand\bu{\boldsymbol{u}}
\newcommand\bx{\boldsymbol{x}}


\newcommand{\defi}{\textbf}
\DeclareMathOperator{\spn}{span}

\DeclareMathOperator{\cl}{cl}
\newcommand{\mS}{\mathcal{S}}


\begin{document}

\title{Extinction in Reaction Network Models}
\author[1]{Pranav Agarwal}
\author[2]{Gheorghe Craciun}
\author[3]{Abhishek Deshpande}
\author[4]{Jiaxin Jin}
\affil[1,3]{\small Center for Computational Natural Sciences and Bioinformatics, \protect \\ International Institute of Information Technology Hyderabad} 
\affil[2]{\small Department of Mathematics and Department of Biomolecular Chemistry, \protect \\ University of Wisconsin-Madison}
\affil[4]{\small Department of Mathematics, University of Louisiana at Lafayette}

\maketitle

\begin{abstract}
In this paper, we study extinction in dynamical systems generated by reaction networks. We introduce two notions: \emph{weak extinction} and \emph{strong extinction}, and relate them to the structure of the underlying network through Lyapunov functions and LaSalle’s invariance principle. 
In particular, for all deficiency-zero networks that are {\em not} weakly reversible, we provide a geometric construction of linear Lyapunov functions. Using these functions, we establish that if these networks have bounded invariant subspaces, then they must exhibit {\em weak} extinction within every such subspace. Also, for linear networks that are not weakly reversible, we show that every species outside a terminal strongly connected component undergoes {\em strong} extinction. Moreover, in order to further emphasize the difference between weak and strong extinction, we construct an example of a reaction system (based on the Ivanova network) that exhibits weak extinction for all the species, but does not exhibit strong extinction in any species. 
\end{abstract}


\section{Introduction}

Systems of polynomial equations frequently arise in models of biological interaction networks. A fundamental question in this context is whether a particular species can \emph{persist} or go \emph{extinct}. 
Persistence, defined as the property that no species goes extinct, has been studied extensively. 
Starting from the work of Feinberg \cite{feinberg1977chemical}, who conjectured that weakly reversible reaction networks are persistent, subsequent studies have established persistence in specific network classes. In particular, two-dimensional endotactic networks and weakly reversible networks with two-dimensional stoichiometric subspaces have been shown to be persistent \cite{craciun2013persistence, pantea2012persistence}, and strongly endotactic networks were proven to be persistent in \cite{gopalkrishnan2014geometric}.

Our focus here is on \emph{extinction}, which has biological significance. In epidemiology, it can be used to identify conditions for eradicating a certain type of infection, which is important for therapeutic design~\cite{dowdle1998principles, henderson2013lessons}. In the pharmaceutical industry, it can be used to selectively flush out a particular toxic species~\cite{bidlack2002casarett,goodman1996goodman,rowland2011clinical}. This highlights that extinction is not merely a theoretical outcome of a dynamical system, but also a clinically meaningful endpoint.
Determining when and how extinction occurs provides key insights into the long-term behavior of biological systems.


The study of extinction is closely linked to the dynamics and stability of these networks. Papers by May~\cite{may1972stable} showed that stability depends on the structure of interactions among species rather than on system complexity.  
Despite its relevance, the notion of extinction in reaction networks remains relatively underdeveloped. One line of work~\cite{johnston2018conditions} introduces an \emph{extinction event} for networks with a discrete state space, such as those modeled by discrete-state Markov chains. The approach constructs a modified network and verifies structural properties of this auxiliary network to determine whether the original system exhibits an extinction event. A computational package implementing these criteria was later introduced in~\cite{johnston2017computational}.

In this paper, we introduce the notions of \emph{weak extinction} and \emph{strong extinction}. We say that a mass-action system exhibits \emph{weak extinction} if for some initial condition, there exists a species $X_i$ such that the corresponding solution $\bx(t)$ satisfies
\[
\liminf\limits_{t \to \infty} x_i (t) = 0.
\]
Further, a mass-action system exhibits \emph{strong extinction} if for some initial condition, there exists a species $X_i$ such that the corresponding solution $\bx(t)$ satisfies
\[
\lim\limits_{t \to \infty} x_i (t) = 0.
\]
A detailed formulation of these two notions is provided in Definition~\ref{defn:strong_extinction}.

By definition, strong extinction in species $X_i$ implies weak extinction in species $X_i$. However, the converse does not hold. In Example~\ref{ex:modified_ivanova}, for a system with initial condition $\bx_0 \in \mathbb{R}^n_{>0}$, the corresponding solution $\bx(t)$ satisfies
\[
\liminf_{t \to \infty} x_i(t) = 0, 
\ \text{ but } \ \lim_{t \to \infty} x_i(t) \neq 0,
\]
illustrating that weak extinction can occur without strong extinction.

Here, we connect extinction with the theory of \emph{Lyapunov functions} and \emph{LaSalle’s invariance principle} (see Definition \ref{def:lyapunov_function} and Theorem \ref{thm:LaSalles_invariance}). 
In Section~\ref{sec:non_wr_lyapunov}, Theorem~\ref{thm:stronger_not_consistent} establishes four equivalent statements that relate the structure of the network, the dynamical properties of its mass-action systems, and the existence of a linear Lyapunov function.
This further implies the following equivalence (see details in Remark \ref{rmk:lyapunov_function}):

\begin{enumerate}[label=(\alph*)]
\item For all choices of rate constants $\bk$, there exists a linear Lyapunov function $\V$ for the mass-action system $(G, \bk)$ such that $\dot{\V}(\bx(t)) < 0$ for every $\bx \in \mathbb{R}^n_{>0}$.

\item For all choices of rate constants $\bk$, there exists a Lyapunov function $\V$ for the mass-action system $(G, \bk)$ such that $\dot{\V}(\bx(t)) < 0$ for every $\bx \in \mathbb{R}^n_{>0}$.
\end{enumerate} 

\noindent
This equivalence is somewhat unexpected, as it shows that one may restrict attention to \emph{linear} Lyapunov functions, significantly simplifying the analysis. We then provide an explicit construction of such a Lyapunov function for every deficiency-zero reaction network that is not weakly reversible (see Definitions \ref{def:deficiency} and \ref{def:graphic_property}) in the following proposition.

\begin{proposition}
\label{prop:def_0_non_wr}

Let $G = (V, E)$ be a deficiency-zero reaction network that is not weakly reversible. 
For any choice of rate constants $\bk$, the mass-action system $(G, \bk)$ admits a linear Lyapunov function $\V$ such that 
\[
\dot{\V}(\bx(t)) < 0 
\ \text{ for all } \ 
\bx \in \mathbb{R}^n_{>0}.
\]
\end{proposition}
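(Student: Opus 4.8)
The plan is to look for a \emph{linear} candidate $\V(\bx) = \bw \cdot \bx$ and to reduce the differential inequality to a purely combinatorial--geometric condition on the reaction vectors. Writing out the mass-action vector field and grouping the reactions by their source complex, one gets
\[
\dot\V(\bx) \;=\; \bw\cdot\dot\bx \;=\; \sum_{y\in V} \bx^{y}\,\big(\bw\cdot \bv_y\big), \qquad \bv_y := \sum_{y\to y'\in E} k_{y\to y'}\,(y'-y).
\]
Since $\bx^{y}>0$ on $\R^n_{>0}$, it suffices to produce a $\bw$ with $\bw\cdot\bv_y \le 0$ for every complex $y$ and $\bw\cdot\bv_{y_0}<0$ for at least one $y_0$: then the $y_0$-term is strictly negative and all others are nonpositive, so $\dot\V(\bx)<0$ everywhere on $\R^n_{>0}$. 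Thus the whole statement reduces to the existence of such a separating $\bw$ for each fixed $\bk$ (the vector $\bw$ is allowed to depend on $\bk$ through the $\bv_y$).

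First I would phrase this existence as a theorem of the alternative. Let $C = \mathrm{cone}\{\bv_y : y\in V\}$. A $\bw$ as above exists if and only if $C$ is \emph{not} a linear subspace: if no such $\bw$ existed, then every $\bw$ in the polar cone $C^{\circ}=\{\bw:\bw\cdot\bv_y\le0\ \forall y\}$ would satisfy $\bw\cdot\bv_y=0$ for all $y$, forcing $C^{\circ}=(\spn C)^{\perp}$, a subspace; and the polar of a (finitely generated, hence closed) convex cone is a subspace exactly when the cone itself is, via $C=C^{\circ\circ}$. So the task becomes: show that $C$ fails to be a linear subspace whenever $G$ is deficiency zero and not weakly reversible.

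The crux --- and the step I expect to be the main obstacle --- is to rule out $C$ being a subspace, and this is exactly where deficiency zero enters. Assume for contradiction that $C$ is a subspace. Then $-\bv_y\in C$ for every $y$, and summing the resulting nonnegative representations produces strictly positive coefficients $\mu_y>0$ with $\sum_{y}\mu_y\bv_y=0$ in species space $\R^n$. Put $j_{y\to y'}:=\mu_y\,k_{y\to y'}>0$ on every edge, so that $\sum_{y\to y'} j_{y\to y'}(y'-y)=0$. Now lift this relation from species space to complex space $\R^{V}$: with $Y(e_y)=y$ the complex-to-species map and $\partial$ the incidence map $e_{y\to y'}\mapsto e_{y'}-e_y$, the element $J:=\sum_{y\to y'} j_{y\to y'}(e_{y'}-e_{y})\in\R^{V}$ lies in $\im\partial$ and satisfies $Y(J)=\sum j_{y\to y'}(y'-y)=0$, i.e.\ $J\in\ker Y\cap\im\partial$. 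Because $G$ has deficiency zero, $\ker Y\cap\im\partial=\{0\}$ (the standard reformulation of $\delta=\dim(\ker Y\cap\im\partial)=0$), whence $J=0$.

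Finally I would read off the contradiction combinatorially. The vanishing $J=0$ says that at each complex the total incoming weight equals the total outgoing weight, i.e.\ $(j_{y\to y'})$ is a \emph{strictly positive circulation} on the directed graph $G$. Summing the balance over the vertices of a source strongly connected component of any non-strongly-connected linkage class shows that a strictly positive circulation forces every linkage class to be strongly connected --- that is, $G$ is weakly reversible, contradicting the hypothesis. Hence $C$ is not a subspace, the separating $\bw$ exists, and $\V(\bx)=\bw\cdot\bx$ is the desired linear Lyapunov function with $\dot\V(\bx)<0$ on $\R^n_{>0}$. I expect the only delicate points to be the bookkeeping that keeps the circulation strictly positive on \emph{every} edge (handled by the strict positivity of the $\mu_y$, together with an arbitrary positive choice at complexes where $\bv_y=0$) and the careful statement of the alternative in that degenerate case.
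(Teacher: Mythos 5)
Your proof is correct, but it takes a genuinely different route from the paper's. The paper also reduces matters (via its Theorem~\ref{thm:stronger_not_consistent}) to finding a vector $\bw$ with $\bw\cdot(\by'-\by)\le 0$ for all reactions and strict for one, but it then \emph{constructs} $\bw$ geometrically: it splits $V=V_1\sqcup V_2$ around a terminal strongly connected component, uses Lemma~\ref{lem:deficiency_zero} to get the dimension count $\dim\mS_G=\dim\mS_{G_1}+\dim\mS_{G_2}+1$, and takes $\bw\perp\mS_{G_1}+\mS_{G_2}$ with $\bw\cdot(\by_1-\by_2)<0$; this produces an explicit $\bw$ that is independent of $\bk$. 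You instead argue by contradiction through a theorem of the alternative: if no separating $\bw$ exists, the cone of the source-aggregated vectors $\bv_y$ is a subspace, which yields a strictly positive combination $\sum_{\by\to\by'}j_{\by\to\by'}(\by'-\by)=0$, i.e.\ consistency; lifting to complex space and using the identity $\delta=\dim(\ker Y\cap\im\partial)$ forces the flow to be a strictly positive circulation, and the balance argument over a source strongly connected component then forces weak reversibility --- contradiction. In effect you reprove the classical fact that deficiency-zero, non--weakly-reversible networks are inconsistent, which plugs directly into the paper's implication $(a)\Rightarrow(c)$ of Theorem~\ref{thm:stronger_not_consistent}. Two minor remarks: the identity $\delta=\dim(\ker Y\cap\im\partial)$ is standard but is not stated in the paper (only $\delta=|V|-\ell-s$ and Lemma~\ref{lem:deficiency_zero} are), so you should include the one-line rank--nullity derivation; and since you apply the alternative to the $\bk$-dependent vectors $\bv_y$, your $\bw$ may a priori depend on $\bk$ --- this still proves the proposition as stated, but note that your intermediate conclusion ($G$ is not consistent) already yields, via Stiemke's Theorem applied to the reaction vectors themselves, a single $\bw$ valid for all $\bk$, matching what the paper's construction delivers.
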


In Section~\ref{sec:extinction_two_class}, we apply the Lyapunov function framework developed earlier to analyze extinction behavior in two important classes of reaction networks that are not weakly reversible. Our first result concerns deficiency-zero networks. By constructing appropriate linear Lyapunov functions, we prove that weak extinction must occur for at least one species whenever the dynamics evolve inside a bounded stoichiometric compatibility class.

\begin{theorem} 
\label{thm:weak_extinction}

Let $G = (V, E)$ be a deficiency-zero reaction network that is not weakly reversible. 
Then, for any choice of rate constants $\bk$ and for any initial condition $\bx_0 \in \mathbb{R}^n_{>0}$ whose stoichiometric compatibility class $\mathcal{S}_{\bx_0}$ is bounded, the associated mass-action system $(G, \bk)$ admits a species $X_i$ such that $(G, \bk)$ exhibits weak extinction in $X_i$; that is, 
\[
\liminf\limits_{t \to \infty} x_i (t) = 0
\ \text{ for some } 1 \leq i \leq n.
\]
\end{theorem}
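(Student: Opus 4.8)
The plan is to combine the linear Lyapunov function supplied by Proposition~\ref{prop:def_0_non_wr} with LaSalle's invariance principle, using the boundedness hypothesis to force the $\omega$-limit set onto the boundary of the nonnegative orthant. Since $G$ is deficiency-zero and not weakly reversible, Proposition~\ref{prop:def_0_non_wr} gives, for the prescribed rate constants $\bk$, a linear Lyapunov function $\V(\bx) = \bw \cdot \bx$ with $\dot{\V}(\bx) < 0$ for all $\bx \in \mathbb{R}^n_{>0}$. I would first record the standard facts that the solution $\bx(t)$ with $\bx(0) = \bx_0$ remains in the stoichiometric compatibility class $\mathcal{S}_{\bx_0} \subseteq \overline{\mathbb{R}^n_{\geq 0}}$ for all $t \geq 0$ and that $\mathcal{S}_{\bx_0}$ is forward invariant. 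By hypothesis $\mathcal{S}_{\bx_0}$ is bounded, so its closure $\overline{\mathcal{S}_{\bx_0}}$ is a compact, forward-invariant set; in particular the trajectory does not escape to infinity, exists for all forward time, stays bounded, and therefore possesses a nonempty, compact $\omega$-limit set $\omega(\bx_0) \subseteq \overline{\mathcal{S}_{\bx_0}}$.

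Next, because $\V$ is linear, its derivative along the flow is $\dot{\V}(\bx) = \bw \cdot f(\bx)$, where $f$ denotes the (polynomial, hence continuous) mass-action vector field. Since $\dot{\V} < 0$ throughout the open orthant, continuity extends the inequality to $\dot{\V} \leq 0$ on the closure $\overline{\mathcal{S}_{\bx_0}}$, with equality impossible at any interior point. I would then apply LaSalle's invariance principle (Theorem~\ref{thm:LaSalles_invariance}) on the compact invariant set $\overline{\mathcal{S}_{\bx_0}}$: the trajectory approaches the largest invariant set contained in $\{\bx \in \overline{\mathcal{S}_{\bx_0}} : \dot{\V}(\bx) = 0\}$. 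Since the zero set of $\dot{\V}$ meets the orthant only along its boundary, this forces $\omega(\bx_0) \subseteq \partial \mathbb{R}^n_{\geq 0}$, i.e.\ every limit point of the trajectory has at least one vanishing coordinate. Finally, picking any $\bp \in \omega(\bx_0)$ (which is nonempty), the previous step gives $p_i = 0$ for some index $i$; by definition of the $\omega$-limit set there is a sequence $t_m \to \infty$ with $\bx(t_m) \to \bp$, whence $x_i(t_m) \to 0$, and together with $x_i(t) \geq 0$ this yields $\liminf_{t \to \infty} x_i(t) = 0$, which is exactly weak extinction in $X_i$.

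The main obstacle, and the point deserving the most care, is the interface between the strict inequality $\dot{\V} < 0$ (guaranteed by the proposition only on the open orthant $\mathbb{R}^n_{>0}$) and the behavior of the system near the boundary. One must verify that the hypotheses of LaSalle's principle genuinely hold on the closed, compact class $\overline{\mathcal{S}_{\bx_0}}$ — in particular that $\dot{\V}$ extends continuously with $\dot{\V} \leq 0$ up to the boundary — and that the invariant set on which $\dot{\V} = 0$ is disjoint from the interior, so that $\omega(\bx_0)$ is pushed entirely onto $\partial \mathbb{R}^n_{\geq 0}$. I expect everything else to be routine once these compactness and boundary-regularity points are settled; note that only $\liminf = 0$ is claimed here, so there is no need to control whether the trajectory converges to a single boundary point, which is what separates this conclusion from the stronger notion of strong extinction.
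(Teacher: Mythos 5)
Your proposal is correct and follows essentially the same route as the paper's proof: invoke Proposition~\ref{prop:def_0_non_wr} to obtain the linear Lyapunov function with $\dot{\V}<0$ on $\mathbb{R}^n_{>0}$, use boundedness of $\mathcal{S}_{\bx_0}$ to get a compact forward-invariant set, and apply LaSalle's invariance principle to conclude $\omega(\bx_0)\subseteq\partial\mathbb{R}^n_{\geq 0}$, hence $\liminf_{t\to\infty}x_i(t)=0$ for some $i$. Your added care about extending $\dot{\V}\leq 0$ continuously to the closure and extracting the liminf from a boundary limit point is a slight elaboration of details the paper leaves implicit, but the argument is the same.
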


Finally, we consider linear networks that are not weakly reversible. In this case, the reaction structure ensures that certain species decay monotonically, and the associated linear Lyapunov functions enable us to establish strong extinction.

\begin{theorem} 
\label{thm:strong_extinction}

Let $G = (V, E)$ be a reaction network that is not weakly reversible, in which every source vertex consists of a single species and every target vertex is either $\emptyset$ or a single species.
Then, for any choice of rate constants $\bk$ and for any initial condition $\bx_0 \in \mathbb{R}^n_{>0}$, the associated mass-action system $(G, \bk)$ exhibits strong extinction in each species $X_i$ not belonging to a terminal strongly connected component of $G$; that is,
\[
\lim\limits_{t \to \infty} x_i (t) = 0
\ \text{ for all such species } X_i.
\]
\end{theorem}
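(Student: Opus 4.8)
The plan is to exploit the fact that, under the stated hypotheses, the induced mass-action dynamics is \emph{linear}. Every reaction is of the form $X_i \to X_j$ or $X_i \to \emptyset$, so for rate constants $\bk$ the system is $\dot{\bx} = A\bx$, where $A$ has off-diagonal entries $A_{ji} = k_{ij} \ge 0$ (the rate constant of $X_i \to X_j$) and diagonal entries $A_{ii} = -\sum_{j} k_{ij}$ equal to minus the total outflow rate of $X_i$, including any degradation $X_i \to \emptyset$. Thus $A$ is a Metzler matrix, so $\R^n_{>0}$ is forward invariant and solutions exist for all $t \ge 0$, and each column sum of $A$ equals $-\kappa_i \le 0$, where $\kappa_i \ge 0$ is the rate constant of $X_i \to \emptyset$ (and $0$ if no such reaction exists). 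Reducing the theorem to the spectral behavior of $A$ is the first move.

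First I would record the combinatorial structure. Let the strongly connected components (SCCs) of $G$, viewed as a directed graph on the species together with the sink $\emptyset$, be $C_1,\dots,C_m$, and fix a topological ordering of the condensation so that every reaction between distinct components points from a lower- to a higher-indexed one. Reordering the species accordingly makes $A$ block triangular, with diagonal blocks $A_{C_\ell C_\ell}$ recording within-component reactions and off-diagonal blocks recording inter-component flow. A short computation shows that the $i$-th column sum of the diagonal block $A_{CC}$ equals $-\bigl(\sum_{l\notin C} k_{il} + \kappa_i\bigr)\le 0$, and is \emph{strictly} negative exactly when $X_i$ has a reaction leaving $C$ (to another component or to $\emptyset$).

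The key step, where the Lyapunov viewpoint enters, is the lemma: if $C$ is a \emph{non-terminal} SCC then $A_{CC}$ is Hurwitz. Indeed $A_{CC}$ is an irreducible Metzler matrix, irreducible because $C$ is strongly connected, whose column sums are $\le 0$ with at least one strictly negative, since non-terminality forces some $X_i\in C$ to react outside $C$. Taking the Perron eigenvector $\bu > 0$ with $A_{CC}\bu = \lambda\bu$ and pairing with $\mathbf{1}$ gives $\lambda\,\mathbf{1}^{\top}\bu = \mathbf{1}^{\top}A_{CC}\,\bu < 0$, so the spectral abscissa satisfies $\lambda < 0$; equivalently there is a positive vector $\bv$ with $A_{CC}^{\top}\bv < 0$, making $\V_C(\bx) = \bv^{\top}\bx_C$ a linear Lyapunov function that certifies exponential decay of the $C$-coordinates in the autonomous block. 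This is precisely the linear Lyapunov function produced by the framework of Section~\ref{sec:non_wr_lyapunov}, now localized to $C$. I would then run an induction along the topological order: for a source non-terminal SCC the block is autonomous and Hurwitzness gives $\bx_C(t)\to 0$, while for a general non-terminal SCC one has $\dot{\bx}_C = A_{CC}\bx_C + \bu(t)$, where the forcing $\bu(t)$ is a fixed linear combination of upstream blocks; a convergent-input-convergent-state argument (variation of parameters with $A_{CC}$ Hurwitz) then yields $\bx_C(t)\to 0$ once the upstream blocks vanish.

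I expect the main obstacle to be the Hurwitz lemma together with the bookkeeping that closes the induction, namely the observation that \emph{every predecessor of a non-terminal SCC is itself non-terminal}, so the flow feeding a non-terminal block comes only from non-terminal blocks processed earlier and hence already known to vanish. This is exactly what prevents the induction from ever importing mass from a terminal, possibly non-decaying, component, and it is also where the hypothesis that $G$ is not weakly reversible does its work: were $G$ weakly reversible, every component would be terminal and the statement vacuous. The convergent-input-implies-convergent-state fact for Hurwitz systems is standard but should be stated cleanly, since it is what converts the per-block spectral estimate into genuine strong extinction $\lim_{t\to\infty} x_i(t) = 0$ for every species $X_i$ outside a terminal strongly connected component.
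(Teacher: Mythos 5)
Your proposal is correct, but it takes a genuinely different route from the paper. The paper never passes to the matrix $\dot{\bx}=A\bx$ or to spectral theory; instead it sets $N$ to be the union of non-terminal strongly connected components and $T$ the union of terminal ones, takes $\V(\bx)=\sum_{X_i\in N}x_i$ as a Lyapunov function (internal transfers within $N$ cancel, so $\dot{\V}$ is a nonpositive sum of $-k_{X_i\to X_j}x_i$ over reactions leaving $N$), applies LaSalle to conclude $x_i(t)\to 0$ for the ``first layer'' $N_1$ of species with a reaction directly into $T$, and then peels off successive layers $N_2,N_3,\dots$ (species feeding into the previous layer) by applying Barbalat's Lemma to the partial sums $\V_1,\V_2,\dots$; so the paper's induction runs \emph{backward} from the terminal components along the layers $N_1, N_2, \ldots$, whereas yours runs \emph{forward} along a topological order of the condensation. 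Your key lemma (non-terminal diagonal blocks of the block-triangularized Metzler matrix are Hurwitz, via the Perron eigenvector paired against the column sums) replaces the paper's LaSalle--Barbalat machinery, and the observation that every predecessor of any SCC is non-terminal plays the same closing role as the paper's layer bookkeeping. What your approach buys is quantitative strength --- exponential decay rates and a clean reduction to standard converging-input/converging-state theory for Hurwitz systems --- at the cost of being tied entirely to linearity; the paper's layered Lyapunov argument is softer and stays within the LaSalle framework used throughout the rest of the paper, which is presumably why the authors chose it. Both arguments are complete modulo standard facts (Perron--Frobenius and variation of parameters for you, Barbalat and uniform continuity of $\dot{\V}_j$ on the compact invariant set $D$ for the paper), so I see no gap in your proposal; just make sure to state the irreducibility of $A_{CC}$ carefully for singleton non-terminal components, where the Hurwitz claim reduces to the diagonal entry being strictly negative.
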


\bigskip

\noindent
\textbf{Structure of the paper.}
In Section~\ref{sec:reaction_networks}, we review fundamental definitions and basic properties of reaction networks.
In Section~\ref{sec:extinction}, we introduce the notions of weak extinction and strong extinction.
Sections \ref{sec:strict_lyapunov} and \ref{sec:non_wr_lyapunov} focus on Lyapunov functions for reaction networks. In Theorem \ref{thm:lasalle_unique}, we apply LaSalle’s invariance principle to show that if a mass-action system admitting a strict Lyapunov function has a stoichiometric subspace containing only finitely many equilibria, then every trajectory converges to a unique nonnegative equilibrium. 
Theorem \ref{thm:stronger_not_consistent} shows that a reaction network fails to admit a positive equilibrium for any choice of rate constants if and only if it admits a linear Lyapunov function. 
Furthermore, we provide a geometric construction of such a function for deficiency-zero, non–weakly reversible networks.
In Section~\ref{sec:extinction_two_class}, we show that every deficiency-zero, non–weakly reversible network necessarily contains a species exhibiting weak extinction. 
Finally, we prove that linear non–weakly reversible networks admit a Lyapunov function, implying that each species not belonging to a terminal strongly connected component undergoes strong extinction.
Section~\ref{sec:discussion} summarizes the main results of the paper and outlines directions for future research.

\bigskip

\noindent
\textbf{Notation.}
Let $\mathbb{R}_{\geq 0}^n$ and $\mathbb{R}_{>0}^n$ denote the set of vectors in $\mathbb{R}^n$ with non-negative entries and positive entries, respectively. For vectors $\bx = (\bx_1, \ldots, \bx_n)^{\intercal}\in \RR^n_{>0}$ and $\by = (\by_1, \ldots, \by_n)^{\intercal} \in \RR^n$, define the following notation:
\begin{equation} \notag
\bx^{\by} = \bx_1^{y_{1}} \cdots \bx_n^{y_{n}} \in \mathbb{R}_{>0}.
\end{equation}
For any set $A \subseteq \mathbb{R}^n$, let $\cl(A)$ denote its closure in $\mathbb{R}^n$.

\section{Reaction Networks}
\label{sec:reaction_networks}

In this section, we recall several definitions from reaction network theory.

\begin{definition}[\cite{craciun2015toric, craciun2019polynomial,craciun2020endotactic}]
\label{def:reaction network}

\begin{enumerate}[label=(\alph*)]
\item A \defi{reaction network} $G = (V, E)$ can be represented as a directed graph in $\RR^n$, also called a \defi{Euclidean embedded graph (or E-graph)}, where $V \subset \mathbb{R}^n$ is a finite set of \defi{vertices}, and $E \subseteq V \times V$ is a finite set of directed \defi{edges}.
We assume that $G = (V, E)$ has no self-loops or isolated vertices, and contains at most one edge between any ordered pair of vertices. 

\item A directed edge $(\by,\by')\in E$, also called a \defi{reaction}, is denoted by $\by \to \by'$, where $\by$ and $\by'$ are referred to as the \defi{source vertex} and \defi{target vertex}, respectively.
The corresponding \defi{reaction vector} is defined as $\by' - \by \in\mathbb{R}^n$.
\end{enumerate}
\end{definition}

\begin{definition}[\cite{feinberg1979lectures,gunawardena2003chemical,yu2018mathematical}]
\label{def:graphic_property}

Let $G = (V, E)$ be a reaction network.
\begin{enumerate}[label=(\alph*)]
\item The set of vertices $V$ is partitioned into maximal connected components of $G$, referred to as \defi{linkage classes}.

\item A connected component of $G$ is said to be \defi{strongly connected} if every reaction in the component lies on a directed cycle. 
In particular, a linkage class of $G$ is said to be strongly connected if it satisfies this condition.
The reaction network $G$ is called \defi{weakly reversible} if each of its linkage classes is strongly connected.

\item A strongly connected component $SC$ of $G$ is said to be \defi{terminal strongly connected} if for every reaction $\by \to \by' \in E$ with $\by \in SC$, we have $\by' \in SC$. 
In other words, there are no reactions with the source vertex in $SC$ and the target vertex in $V \setminus SC$.

\item $G$ is said to be \defi{consistent} if there exists a collection of positive scalars $\big\{ \lambda_{\by \to \by'}>0 \mid \by \to \by' \in E \big\}$ such that
\begin{equation} \notag
\sum\limits_{\by\rightarrow \by'\in E}\lambda_{\by\rightarrow \by'} (\by' -\by) = 0.
\end{equation}
\end{enumerate}
\end{definition}

\begin{figure}[ht!]
\centering
\includegraphics[scale=0.38]{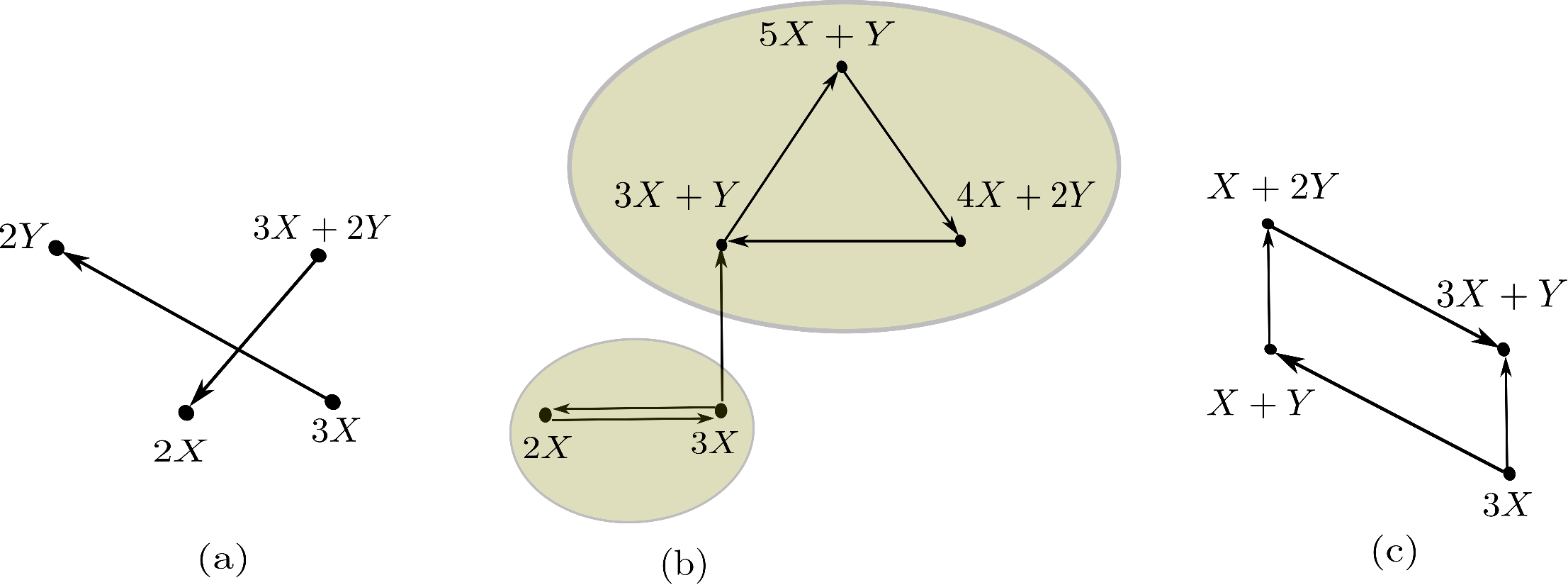}
\caption{ 
Examples of reaction networks. (a) a reaction network with two linkage classes. (b) a reaction network with one linkage class and two terminal strongly connected components (circled in light green). (c) A weakly reversible reaction network with a single, strongly connected linkage class.
}
\label{fig:euclidean_embedded_graph}
\end{figure} 

\begin{definition}[\cite{feinberg1979lectures,gunawardena2003chemical,voit2015150,yu2018mathematical}]
\label{def:mass_action}

Let $G=(V, E)$ be a reaction network. 
\begin{enumerate}[label=(\alph*)]
\item For each reaction $\by \to \by'\in E$, let $k_{\by\to \by'}$ denote the corresponding \defi{reaction rate constant}.
Define the \defi{reaction rate vector} as
\[
\bk := (k_{\by\to \by'})_{\by\to \by' \in E} \in \mathbb{R}_{>0}^{|E|}.
\]
The \defi{associated mass-action system} generated by $(G, \bk)$ is the dynamical system on  $\RR_{>0}^n$ given by
\begin{equation}
\label{eq:mas_ds}
\frac{d\bx}{dt} = \sum\limits_{\by \to \by' \in E}k_{\by\rightarrow\by'}{\bx}^{\by}(\by'-\by).
\end{equation}

\item The \defi{stoichiometric subspace} $\mathcal{S}_G$ of the reaction network $G$ is defined as
\begin{equation} \notag
\mathcal{S}_G = \spn \{ \by' - \by: \by \rightarrow \by' \in E \}.
\end{equation}
\cite{sontag2001structure} establishes that if $V \subset \mathbb{Z}_{\geq 0}^n$, then the positive orthant $\mathbb{R}_{>0}^n$ is forward-invariant\footnote{ 
A set $T \subset \R^n$ is said to be \emph{forward-invariant} with respect to a dynamical system if, for every trajectory $x(t)$ with $x(0) \in T$, we have $x(\tau) \in T$ for all $\tau \geq 0$. Furthermore, $T$ is said to be \emph{invariant} with respect to a dynamical system if $x(\tau) \in T$ for all $\tau \in \R$.
\label{fn1}} under mass-action kinetics given by \eqref{eq:mas_ds}. Moreover, for any initial condition ${\boldsymbol{x}_0}\in \mathbb{R}_{>0}^n$,  the solution to \eqref{eq:mas_ds} remains confined to the invariant set
\[
\mathcal{S}_{\bx_0} := (\bx_0 + \mathcal{S}_G) \cap \mathbb{R}_{>0}^n,
\]
which is referred to as the \defi{stoichiometric compatibility class} of $\bx_0$.
\end{enumerate}
\end{definition}

\begin{example}
\label{ex:stoichiometric_compatibility_class}

\begin{figure}[!ht]
\centering
\includegraphics[scale=0.5]{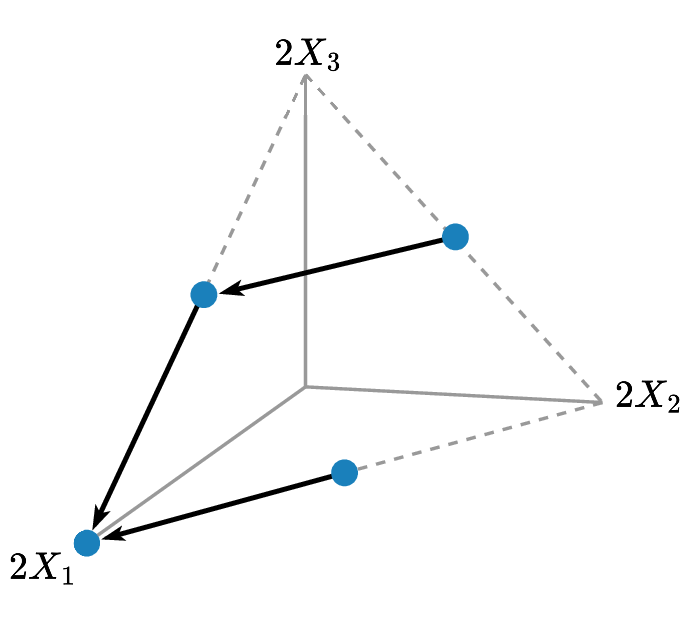}
\caption{The reaction network considered in Example~\ref{ex:stoichiometric_compatibility_class}.
}
\label{fig:LaSalles_1}
\end{figure}

Consider the reaction network $G$ shown in Figure~\ref{fig:LaSalles_1}, given by
\begin{equation} \notag
\begin{split}
X_1 + X_2 \xrightarrow[]{} 2X_1 \qquad
X_2 + X_3 \xrightarrow[]{} X_1 + X_3 \qquad
X_1 + X_3 \xrightarrow[]{} 2X_1 
\end{split}
\end{equation}
By direct computation, the stoichiometric subspace of $G$ is
\begin{equation} \notag
\mathcal{S}_G = \spn \big\{ (1, -1, 0)^{\intercal},  (1, 0, -1)^{\intercal} \big\}.
\end{equation}
For any initial condition ${\boldsymbol{x}_0} = (x_{1, 0}, x_{2, 0}, x_{3, 0}) \in \mathbb{R}_{>0}^n$, the associated stoichiometric compatibility class is
\[
\mathcal{S}_{\bx_0} := \{ \bx \in \R^3_{>0} \mid x_1 + x_2 + x_3 = x_{1, 0} + x_{2, 0} + x_{3, 0} \}.
\]
\qed
\end{example}


\begin{definition}[\cite{feinberg1979lectures,gunawardena2003chemical}]

Let $(G, \bk)$ be a mass-action system. 
\begin{enumerate}[label=(\alph*)]
\item A point $\bx^* \in \mathbb{R}^n_{>0}$ is called a positive \defi{steady state} of the system if 
\begin{equation} \notag 
\displaystyle\sum_{\by\rightarrow \by' \in E } k_{\by\rightarrow\by'}{(\bx^*)}^{\by}(\by'-\by)=0.
\end{equation}

\item A point $\bx^* \in \mathbb{R}^n_{>0}$ is called a \defi{complex-balanced steady state} of the system if for each vertex $\by_0\in V$,
\begin{equation} \notag
\sum_{\by_0 \rightarrow \by \in E} k_{\by_0 \rightarrow \by} {(\bx^*)}^{\by_0} = \sum_{\by' \rightarrow \by_0 \in E} k_{\by' \rightarrow \by_0} {(\bx^*)}^{\by'}.
\end{equation}
If the mass-action system $(G, \bk)$ admits a complex-balanced steady state, then it is called a \defi{complex-balanced system}. Properties of rate constants that generate the same differential equations as complex-balanced systems have been explored in~\cite{disg_1,disg_2,disg_3,disg_4}. 
\end{enumerate}
\end{definition}

\begin{definition}[\cite{feinberg1979lectures,gunawardena2003chemical}]
\label{def:deficiency}

Let $(G, \bk)$ be a mass-action system with $\ell$ linkage classes, and let $s$ denote the dimension of the stoichiometric subspace. The \defi{(stoichiometric) deficiency} of $(G, \bk)$ is defined as 
\[
\delta= |V| - \ell -s.
\]
\end{definition}

\begin{lemma}[\cite{craciun2019realizations,feinberg2019foundations}]
\label{lem:deficiency_zero} 

Let $G=(V, E)$ be a reaction network with $\ell$ linkage classes, $V = V_1 \cup \ \cdots \ \cup \ V_{\ell}$. 
Then $\GG$ has deficiency zero if and only if the following two conditions hold:
\begin{enumerate}[label=(\alph*)]
\item The vertices within each linkage class are affinely independent; that is, for each $1 \leq i \leq \ell$, if $V_i = \{ \by_1, \ldots, \by_m \}$, then the set $\{ \by_2 - \by_1, \ldots, \by_m - \by_1 \}$ is linearly independent.

\item The stoichiometric subspaces associated with the linkage classes are linearly independent; that is, the sets
\[
\spn \big\{ \by' - \by: \by \rightarrow \by' \in E
\ \text{ with } \
\by, \by' \in V_i \big\},
\ \text{ for $i = 1, \ldots, \ell$},
\]
are linearly independent.
\end{enumerate}
\end{lemma}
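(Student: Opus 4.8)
The plan is to prove the equivalence by a single dimension count that splits the deficiency $\delta = |V| - \ell - s$ into two manifestly nonnegative contributions, one governed by condition (a) and the other by condition (b). First I would introduce, for each linkage class $V_i$ with $n_i := |V_i|$ vertices, its internal stoichiometric subspace $\mathcal{S}_i := \spn\{\by' - \by : \by \to \by' \in E,\ \by, \by' \in V_i\}$. Since the linkage classes are the maximal connected components of $G$, every reaction has both endpoints inside a single linkage class, so the full stoichiometric subspace decomposes as $\mathcal{S}_G = \mathcal{S}_1 + \cdots + \mathcal{S}_\ell$, and hence $s = \dim(\mathcal{S}_1 + \cdots + \mathcal{S}_\ell)$. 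I would also state at the outset that condition (b), ``the subspaces are linearly independent,'' means precisely that $\bw_1 + \cdots + \bw_\ell = 0$ with $\bw_i \in \mathcal{S}_i$ forces every $\bw_i = 0$, i.e. that the sum $\mathcal{S}_1 + \cdots + \mathcal{S}_\ell$ is direct.

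The key preliminary observation is that, because $V_i$ is connected, fixing any base vertex $\by_1 \in V_i$ gives $\mathcal{S}_i = \spn\{\by - \by_1 : \by \in V_i\}$: every vertex difference is a signed sum of reaction vectors along an undirected path joining the two vertices, and conversely each reaction vector is itself such a difference. Consequently $\dim \mathcal{S}_i \leq n_i - 1$, with equality exactly when $\{\by_2 - \by_1, \ldots, \by_{n_i} - \by_1\}$ is linearly independent, which is precisely condition (a) for $V_i$. Separately, subadditivity of dimension for a sum of subspaces yields $\dim(\mathcal{S}_1 + \cdots + \mathcal{S}_\ell) \leq \sum_{i} \dim \mathcal{S}_i$, with equality exactly when the sum is direct, i.e. condition (b).

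Combining these two facts, I would rewrite the deficiency as
\[
\delta = \sum_{i=1}^\ell (n_i - 1) - \dim(\mathcal{S}_1 + \cdots + \mathcal{S}_\ell) = \underbrace{\sum_{i=1}^\ell \big[(n_i-1) - \dim \mathcal{S}_i\big]}_{\geq 0} \;+\; \underbrace{\Big[\sum_{i=1}^\ell \dim \mathcal{S}_i - \dim(\mathcal{S}_1 + \cdots + \mathcal{S}_\ell)\Big]}_{\geq 0}.
\]
Both bracketed terms are nonnegative by the two inequalities above, so $\delta = 0$ forces each of them to vanish, and conversely vanishing of both gives $\delta = 0$. Vanishing of the first term is equivalent to $\dim \mathcal{S}_i = n_i - 1$ for every $i$, hence to condition (a) holding in every linkage class; vanishing of the second term is equivalent to directness of the sum, hence to condition (b). This establishes the claimed equivalence.

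The argument is essentially careful bookkeeping, so I do not expect a deep obstacle. The one place that genuinely needs care is the connectivity step establishing $\mathcal{S}_i = \spn\{\by - \by_1 : \by \in V_i\}$, since this is exactly what delivers the per-linkage-class bound $\dim \mathcal{S}_i \leq n_i - 1$ and pins its equality case to affine independence; here I must invoke that linkage classes are connected components (so no inter-class reactions exist, justifying $\mathcal{S}_G = \sum_i \mathcal{S}_i$) and that the relevant connectedness is in the underlying \emph{undirected} graph, so that a path exists between any two vertices of $V_i$ regardless of edge orientations.
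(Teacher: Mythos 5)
Your proof is correct. Note that the paper itself does not prove this lemma --- it is quoted with citations to the references \cite{craciun2019realizations,feinberg2019foundations} --- so there is no internal proof to compare against; your argument is essentially the standard one found in those sources. The decomposition
\[
\delta \;=\; \sum_{i=1}^{\ell}\bigl[(n_i-1)-\dim \mathcal{S}_i\bigr]\;+\;\Bigl[\sum_{i=1}^{\ell}\dim \mathcal{S}_i-\dim\bigl(\mathcal{S}_1+\cdots+\mathcal{S}_\ell\bigr)\Bigr]
\]
splits the deficiency into the sum of the individual linkage-class deficiencies plus an ``independence defect,'' both nonnegative, and you correctly identify the vanishing of each bracket with conditions (a) and (b) respectively. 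The two points that genuinely require care are both handled: you use undirected connectivity of each linkage class to get $\mathcal{S}_i=\spn\{\by-\by_1:\by\in V_i\}$ (which pins $\dim\mathcal{S}_i\le n_i-1$ and its equality case to affine independence), and you use the fact that linkage classes are maximal connected components to justify $\mathcal{S}_G=\mathcal{S}_1+\cdots+\mathcal{S}_\ell$, i.e.\ that no reaction crosses between classes.
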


\begin{definition}[\cite{feinberg1987chemical,feinberg2019foundations,khalil2002nonlinear}]

Let $\dot{\bx} = f(\bx)$ be a dynamical system, where $\bx \in D \subseteq \mathbb{R}^n_{\geq 0}$ and $f: \mathbb{R}^n \to \mathbb{R}^n$ is a continuous function.
Suppose the initial condition $\bx_0 \in D$. 
The \defi{omega-limit set} of $\bx_0$, denoted by $\omega(\bx_0)$, is defined as
\begin{equation}
\begin{split} \notag
\omega(\bx_0) = \big\{ \bx\in\R^n_{\geq 0} \mid 
& \text{ there exists an increasing sequence of times } \{ t_k \} \to \infty, 
\\& \text{ such that } \lim_{k\to\infty}\bx(t_k) = \bx
\big\}.
\end{split}
\end{equation}
In other words, $\omega(\bx_0)$ consists of all possible limit points of the trajectory starting at $\bx_0$ as time goes to infinity along some subsequence.
\end{definition}

\section{Extinction} 
\label{sec:extinction}

In this section, we focus on the concept of extinction, in which the concentrations of certain species converge to zero. We introduce two notions of extinction in reaction networks: \emph{weak extinction} and \emph{strong extinction}.

\begin{definition}
\label{defn:strong_extinction}
Let $(G, \bk)$ be a mass-action system. 

\begin{enumerate}[label=(\alph*)]
\item $(G, \bk)$ is said to exhibit \defi{weak extinction} if for some initial condition $\bx_0 \in \mathbb{R}^n_{>0}$, there exists a species $X_i$, such that the solution $\bx(t)$ satisfies
\[
\liminf\limits_{t \to \infty} x_i (t) = 0.
\]
In this case, we will say that the mass-action system $(G, \bk)$ exhibits weak extinction in species $X_i$.

\item $(G, \bk)$ is said to exhibit \defi{strong extinction} if for some initial condition $\bx_0 \in \mathbb{R}^n_{>0}$, there exists a species $X_i$ such that the solution $\bx(t)$ satisfies 
\[
\lim\limits_{t \to \infty} x_i (t) = 0.
\]
In this case, we will say that the mass-action system $(G, \bk)$ exhibits strong extinction in species $X_i$.
\end{enumerate}
\end{definition}


In what follows, we show that certain families of reaction networks exhibit weak or strong extinction. Our analysis relies on the properties of Lyapunov functions together with LaSalle’s invariance principle.

\subsection{Lyapunov Functions for Reaction Networks}
\label{sec:strict_lyapunov}


In this section, we introduce the notion of a \emph{Lyapunov function}.
Using LaSalle’s invariance principle, we show that, under certain additional assumptions, systems with strict Lyapunov functions possess robust dynamical properties.

\begin{definition}[\cite{khalil2002nonlinear}]
\label{def:lyapunov_function} 

Let $\dot{\bx} = f(\bx)$ be a dynamical system, where $\bx \in D \subseteq \mathbb{R}^n_{\geq 0}$ and $f: \mathbb{R}^n \to \mathbb{R}^n$ is a continuous function.

\begin{enumerate}[label=(\alph*)]
\item A continuously differentiable non-constant function $\V \colon D \to \mathbb{R}$ is a \defi{Lyapunov function} for the system $\dot{\bx} = f(\bx)$ on $D$ if it satisfies
\[
\dot{\V} (\bx) = \nabla{\V}(\bx)\cdot f(\bx) \leq 0
\ \text{ for all $\bx\in D$}.
\]

\item Moreover, let $E = \big\{ \bx \in D \mid f(\bx) = 0 \big\}$. A Lyapunov function $\V\colon D \to \mathbb{R}$ is called a \defi{strict Lyapunov function} on $D$ if it further satisfies that $\dot{\V} (\bx) = 0$ if and only if $\bx \in E$.
\end{enumerate}
\end{definition}

\begin{remark}

\begin{enumerate}[label=(\alph*)]
\item Lyapunov functions are fundamental tools for analyzing the qualitative behavior of dynamical systems. In particular, the existence of a strict Lyapunov function ensures that the system admits no periodic trajectories. Moreover, if a Lyapunov function is locally positive definite, it can be used to establish the stability of an equilibrium. 

\item In reaction network theory, the Horn–Jackson Lyapunov function
\begin{equation} \label{eq:HJ_Lyapunov_function}
\sum\limits^{n}_{i = 1} x_i \ln (x_i) - x_i - x_i \ln (x^*_i),
\end{equation}
where $\bx^* = (x^*_1, \ldots, x^*_n) \in \mathbb{R}^n_{>0}$ denotes a positive complex-balanced equilibrium, characterizes the stability of complex-balanced systems. This strictly convex Lyapunov function ensures that every complex-balanced equilibrium is locally asymptotically stable~\cite{horn1972general}.
In fact, complex-balanced equilibria are conjectured to be {\em globally} asymptotically stable, a statement known as the \emph{Global Attractor Conjecture}~\cite{CraciunDickensteinShiuSturmfels2009, horn1974}. Craciun has proposed a proof of this conjecture in full generality~\cite{craciun2015toric} using the idea of toric differential inclusions~\cite{craciun2019polynomial,craciun2019quasi,ding2021minimal,ding2022minimal}.

\item There is a direct connection between extinction and the Global Attractor Conjecture. Specifically, it would suffice to prove the conjecture by showing that complex-balanced systems cannot exhibit weak extinction for any positive initial condition~\cite{craciun2013persistence}. Using LaSalle's invariance principle, the absence of extinction prevents trajectories from approaching the boundary and thereby ensures convergence to the unique complex-balanced equilibrium within each stoichiometric compatibility class.
\end{enumerate} 
\end{remark}


We next recall \emph{LaSalle's invariance principle}, a powerful tool that characterizes the asymptotic behavior of trajectories. 
Note that LaSalle's invariance principle appears under multiple formulations. In this work, we adopt the version stated below, which is well-suited for analyzing the asymptotic behavior of trajectories in mass-action systems.


\begin{theorem}[LaSalle's invariance principle \cite{khalil2002nonlinear}]
\label{thm:LaSalles_invariance}

Suppose a dynamical system
\begin{equation} \label{eq:lasalle}
\frac{d \bx}{d t} = f(\bx)
\ \text{ with } \
\bx\in\R^n_{\geq 0}
\ \text{ and } \
f: \R^n_{\ge 0} \to \R^n.
\end{equation}
Let $\Theta\subset\R^n_{\geq 0}$ be a compact forward-invariant set with respect to \eqref{eq:lasalle}, and let $\V : \R^n \to \R$ be a continuously differentiable function such that
\[
\dot{\V} (\bx) \leq 0
\ \text{ for all $\bx \in \Theta$.} 
\]
Define the set $\Phi = \big\{ \bx \in \Theta \mid \dot{\V} (\bx) = 0 \big\}$ and let $\varphi$ denote the largest invariant set\footref{fn1} in $\Phi$.
Then, for every solution $\bx(t)$ with initial condition $\bx_0 \in \Theta$, the omega-limit set of $\bx_0$ satisfies 
\[
\omega(\bx_0)\subseteq\varphi.
\]
\end{theorem}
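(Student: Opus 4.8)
The plan is to establish the inclusion $\omega(\bx_0) \subseteq \varphi$ by first showing that $\omega(\bx_0)$ is a nonempty, compact, invariant set on which $\V$ is constant, and then deducing that this forces $\omega(\bx_0) \subseteq \Phi$, whence $\omega(\bx_0) \subseteq \varphi$ because $\varphi$ is the largest invariant subset of $\Phi$.

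First I would extract the consequences of the hypotheses on $\Theta$. Since $\Theta$ is compact and forward-invariant, the trajectory $\bx(t)$ with $\bx(0)=\bx_0$ stays in $\Theta$ for all $t \ge 0$; in particular the orbit is bounded, so it exists for all forward time and $\omega(\bx_0)$ is nonempty and compact. Along the trajectory the chain rule gives $\tfrac{d}{dt}\V(\bx(t)) = \dot{\V}(\bx(t)) \le 0$, so $t \mapsto \V(\bx(t))$ is nonincreasing; being continuous on the compact set $\Theta$, $\V$ is bounded below there, and a bounded monotone function converges. Hence $\V(\bx(t)) \to c$ for some $c \in \R$. For any $\bv \in \omega(\bx_0)$ there is a sequence $t_k \to \infty$ with $\bx(t_k) \to \bv$, and continuity of $\V$ yields $\V(\bv) = \lim_k \V(\bx(t_k)) = c$; thus $\V \equiv c$ on $\omega(\bx_0)$.

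The decisive step is the invariance of $\omega(\bx_0)$ under the flow, which I would obtain from well-posedness of the initial value problem (uniqueness and continuous dependence on initial data) via a diagonal argument applied to the shifted trajectories $\bx(t_k + s)$. Granting this, fix $\bv \in \omega(\bx_0)$ and let $\psi(s)$ be the solution with $\psi(0)=\bv$. Invariance keeps $\psi(s)$ inside $\omega(\bx_0)$ for all $s$, where $\V$ is identically $c$, so $\tfrac{d}{ds}\V(\psi(s)) \equiv 0$; evaluating at $s=0$ gives $\dot{\V}(\bv) = 0$, i.e.\ $\bv \in \Phi$. Therefore $\omega(\bx_0) \subseteq \Phi$. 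Since $\omega(\bx_0)$ is an invariant subset of $\Phi$ and $\varphi$ is the largest such set, the inclusion $\omega(\bx_0) \subseteq \varphi$ follows.

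I expect the main obstacle to be the rigorous justification of the invariance of the omega-limit set, as this is the only point requiring more than elementary continuity: it rests on uniqueness and continuous dependence of solutions, so that limit points of $\{\bx(t_k)\}$ propagate to the shifted trajectories. This is unproblematic in the intended applications, where $f$ is polynomial (hence smooth) and the confinement to $\bx_0 + \mathcal{S}_G$ guarantees the required well-posedness; the remaining steps — monotonicity, convergence of $\V$, and constancy of $\V$ on $\omega(\bx_0)$ — are routine once boundedness is secured by the compact forward-invariant set $\Theta$.
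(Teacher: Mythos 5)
The paper does not prove this theorem; it is quoted as a classical result with a citation to Khalil, so there is no internal proof to compare against. Your argument is the standard textbook proof (monotone convergence of $\V$ along the trajectory, constancy of $\V$ on $\omega(\bx_0)$, invariance of $\omega(\bx_0)$ forcing $\dot{\V}=0$ there, then maximality of $\varphi$) and it is correct; the one genuine hypothesis you rightly flag is that invariance of the omega-limit set needs uniqueness and continuous dependence of solutions, i.e.\ $f$ locally Lipschitz rather than merely continuous as in the paper's surrounding definitions, which is automatic in all of the paper's applications since $f$ is polynomial.
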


We now present two results that apply LaSalle’s invariance principle to characterize the long-term behavior of dynamical systems admitting a (strict) Lyapunov function.

\begin{theorem}
\label{thm:lasalle_unique}

Let $(G, \bk)$ be a mass-action system, and let $D \subset \R^n_{\geq 0}$ be a compact forward-invariant set of $(G, \bk)$.
Suppose that $(G, \bk)$ admits a strict Lyapunov function on $D$.
If there exists a stoichiometric compatibility class $\mathcal{S}_{\hat{\bx}}$ such that $(G, \bk)$ admits only finitely many non-negative equilibria in $\cl (\mathcal{S}_{\hat{\bx}})$, then every trajectory $\bx (t)$ of $(G, \bk)$ with initial condition $\bx_0 \in \mathcal{S}_{\hat{\bx}} \cap D$ satisfies
\begin{equation} \notag
\lim\limits_{t\to\infty} \bx (t) = \bx^*
\ \text{ for some non-negative equilibrium $\bx^* \in \cl (\mathcal{S}_{\hat{\bx}})$}.
\end{equation} 
\end{theorem}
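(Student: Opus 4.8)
The plan is to run LaSalle's invariance principle (Theorem~\ref{thm:LaSalles_invariance}) on a well-chosen compact forward-invariant set, conclude that the omega-limit set lands inside the finite equilibrium set, and then use the connectedness of omega-limit sets to collapse it to a single point.

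First I would fix a trajectory $\bx(t)$ with $\bx_0 \in \mathcal{S}_{\hat{\bx}} \cap D$ and identify the set on which to apply LaSalle. The natural choice is $\Theta := (\hat{\bx} + \mathcal{S}_G) \cap D$. Since the mass-action vector field $f$ always lies in $\mathcal{S}_G$, any trajectory stays within the affine subspace $\hat{\bx} + \mathcal{S}_G$; together with the forward-invariance of $D$, this makes $\Theta$ forward-invariant, and as a closed subset of the compact set $D$ it is compact. Moreover, because $\R^n_{>0}$ is forward-invariant under mass-action kinetics, the trajectory in fact remains inside $\mathcal{S}_{\hat{\bx}} = (\hat{\bx}+\mathcal{S}_G)\cap\R^n_{>0}$, so every limit point lies in $\cl(\mathcal{S}_{\hat{\bx}})$. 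This is the step that lets me later invoke the finiteness hypothesis, which is stated for $\cl(\mathcal{S}_{\hat{\bx}})$ rather than for $\Theta$.

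Next I would apply LaSalle on $\Theta$ with the given strict Lyapunov function $\V$. By definition of a strict Lyapunov function, the set $\Phi = \{\bx \in \Theta : \dot{\V}(\bx) = 0\}$ coincides with the equilibrium set $\{\bx \in \Theta : f(\bx) = 0\}$. Each point of $\Phi$ is a fixed point, hence trivially invariant, so the largest invariant set $\varphi$ inside $\Phi$ is $\Phi$ itself. LaSalle then yields $\omega(\bx_0) \subseteq \Phi$. Combining this with $\omega(\bx_0) \subseteq \cl(\mathcal{S}_{\hat{\bx}})$ from the previous step, I conclude that $\omega(\bx_0)$ is contained in the set of non-negative equilibria lying in $\cl(\mathcal{S}_{\hat{\bx}})$, which is finite by hypothesis. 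Hence $\omega(\bx_0)$ is a finite set.

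The hard part, and the conceptual heart of the argument, is upgrading ``finitely many candidate limit points'' to ``a unique limit''. Here I would invoke the standard fact that the omega-limit set of a forward trajectory confined to a compact set is nonempty, compact, and \emph{connected}. A finite connected subset of $\R^n$ is necessarily a singleton, so $\omega(\bx_0) = \{\bx^*\}$ for a single non-negative equilibrium $\bx^* \in \cl(\mathcal{S}_{\hat{\bx}})$. Finally, since $\bx(t)$ remains in the compact set $\Theta$ and has $\{\bx^*\}$ as its only limit point, a routine argument shows $\lim_{t\to\infty}\bx(t) = \bx^*$, completing the proof. The only genuinely delicate points are the connectedness property, which I would cite from standard dynamical-systems references, and the care needed to ensure the limit points land in $\cl(\mathcal{S}_{\hat{\bx}})$, where the finiteness assumption actually applies.
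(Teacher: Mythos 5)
Your proposal is correct and follows essentially the same route as the paper's proof: apply LaSalle's invariance principle with the strict Lyapunov function to place $\omega(\bx_0)$ inside the equilibrium set, then use connectedness of omega-limit sets of bounded trajectories together with the finiteness hypothesis to collapse it to a single point. Your explicit restriction to $\Theta = (\hat{\bx} + \mathcal{S}_G)\cap D$ is a small but welcome refinement — the paper applies LaSalle on $D$ and leaves the confinement of $\omega(\bx_0)$ to $\cl(\mathcal{S}_{\hat{\bx}})$ implicit.
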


\begin{proof}

For the system $(G, \bk)$, there exists a strict Lyapunov function $\V (\bx)$ on a compact forward-invariant set $D$ such that 
\[
\dot{\V} (\bx) = \nabla{\V}(\bx)\cdot f(\bx) \leq 0
\ \text{ for all $\bx\in D$}.
\]
Moreover, $\dot{\V} (\bx) = 0$ if and only if $\bx \in \big\{ \bx \in D \mid f(\bx) = 0 \big\}$.
Since $\bx_0 \in \mathcal{S}_{\hat{\bx}} \cap D$, LaSalle’s invariance principle implies that
\begin{equation} \label{eq1:lasalle_unique}
\omega(\bx_0) \subseteq \big\{ \bx \in D \mid f(\bx) = 0 \big\}.
\end{equation}
Recall that $D$ is a compact forward-invariant set of $(G, \bk)$.
Note that omega-limit sets of bounded trajectories are connected and $(G, \bk)$ admits only finitely many non-negative equilibria in $\cl (\mathcal{S}_{\hat{\bx}})$. 
Thus, we conclude that $\omega(\bx_0)$ reduces to a single point $\bx^*$, that is,
\[
\omega(\bx_0) = \{ \bx^* \}.
\]
Together with \eqref{eq1:lasalle_unique}, it follows that $\bx^* \in \cl (\mathcal{S}_{\hat{\bx}})$ is a non-negative equilibrium of $(G, \bk)$, and
\[
\lim\limits_{t\to\infty} \bx (t) = \bx^*.
\]
\end{proof}

\begin{remark}
Theorem~\ref{thm:lasalle_unique} also applies to complex-balanced systems~\cite{horn1972general}. In particular, consider a level set of the Horn–Jackson Lyapunov function \eqref{eq:HJ_Lyapunov_function} that lies entirely within the positive orthant, i.e., does not intersect its boundary.
Let $D$ be the closed set given by the intersection of the region enclosed by this level set with a stoichiometric compatibility class. Then by Theorem~\ref{thm:lasalle_unique}, any trajectory of this complex-balanced system with initial condition in $D$ converges to a non-negative equilibrium.
\end{remark}

\begin{proposition}

Let $(G, \bk)$ be a mass-action system. Suppose that
\begin{enumerate}[label=(\alph*)]
\item every stoichiometric compatibility class is bounded, and

\item for each stoichiometric compatibility class $\mathcal{S}_{\hat{\bx}}$, there exists a Lyapunov function $\V(\bx)$ satisfying $\dot{\V}(\bx (t)) < 0$ for all $\bx \in \mathcal{S}_{\hat{\bx}}$.
\end{enumerate}
Then every trajectory of $(G,\bk)$ exhibits weak extinction. 
\end{proposition}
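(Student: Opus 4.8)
The plan is to fix an arbitrary initial condition $\bx_0 \in \mathbb{R}^n_{>0}$ and show that weak extinction occurs along its trajectory. First I would set $\Theta := \cl(\mathcal{S}_{\bx_0})$. By assumption (a) the compatibility class $\mathcal{S}_{\bx_0}$ is bounded, so $\Theta$ is compact; and because $\mathbb{R}^n_{>0}$ is forward-invariant under mass-action kinetics while the nonnegative orthant is forward-invariant as well (the polynomial vector field $f$ is tangent or inward-pointing on each coordinate hyperplane), one checks that $\Theta$ is a compact forward-invariant set. Let $\V$ be the Lyapunov function supplied by assumption (b) for the class $\mathcal{S}_{\hat{\bx}} = \mathcal{S}_{\bx_0}$, so that $\dot{\V}(\bx) < 0$ for every $\bx \in \mathcal{S}_{\bx_0}$.

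Next I would apply LaSalle's invariance principle (Theorem~\ref{thm:LaSalles_invariance}) on $\Theta$. Since the trajectory $\bx(t)$ with $\bx(0) = \bx_0$ remains in the compact set $\Theta$, its omega-limit set $\omega(\bx_0)$ is nonempty and satisfies $\omega(\bx_0) \subseteq \varphi \subseteq \Phi$, where $\Phi = \{\bx \in \Theta : \dot{\V}(\bx) = 0\}$. The crucial observation is that assumption (b) forces $\Phi$ into the boundary: since $\dot{\V}(\bx) < 0$ for every $\bx \in \mathcal{S}_{\bx_0} = \Theta \cap \mathbb{R}^n_{>0}$, no interior point can lie in $\Phi$, whence $\Phi \subseteq \Theta \setminus \mathbb{R}^n_{>0}$. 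Consequently every point of $\omega(\bx_0)$ has at least one vanishing coordinate.

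Finally, I would translate this into weak extinction. Pick any $\by^* \in \omega(\bx_0)$; it exists by compactness and has some coordinate $y^*_i = 0$. By definition of the omega-limit set there is a sequence $t_k \to \infty$ with $\bx(t_k) \to \by^*$, so $x_i(t_k) \to 0$; since $x_i(t) \geq 0$ for all $t$, this gives $\liminf_{t\to\infty} x_i(t) = 0$, i.e. weak extinction in species $X_i$. As $\bx_0 \in \mathbb{R}^n_{>0}$ was arbitrary, every trajectory exhibits weak extinction.

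The step I expect to be the main obstacle is justifying that $\Theta = \cl(\mathcal{S}_{\bx_0})$ is genuinely forward-invariant and that $\V$, together with $\dot{\V}$, extends as a continuously differentiable function up to the boundary, so that LaSalle's principle applies and $\dot{\V}$ is even defined at the boundary limit points. If one prefers to bypass the forward-invariance of the closed class, an equivalent route is a direct argument: $\V(\bx(t))$ is strictly decreasing and bounded below on the compact orbit closure, hence converges to some limit $L$; then $\V \equiv L$ on $\omega(\bx_0)$, and invariance of $\omega(\bx_0)$ under the flow forces $\dot{\V} = 0$ at each of its points, again placing them on the boundary by (b).
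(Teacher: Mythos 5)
Your proposal is correct and follows essentially the same route as the paper: take $\Theta = \cl(\mathcal{S}_{\bx_0})$, apply LaSalle's invariance principle, and use $\dot{\V} < 0$ on the open class to push $\omega(\bx_0)$ into the boundary $\partial\mathbb{R}^n_{\geq 0}$. Your final step making explicit that a boundary omega-limit point yields $\liminf_{t\to\infty} x_i(t) = 0$ is a welcome bit of extra care that the paper leaves implicit.
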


\begin{proof}

Consider an arbitrary stoichiometric compatibility class $\mathcal{S}_{\hat{\bx}}$.
Let $D := \cl (\mathcal{S}_{\hat{\bx}})$ denote the closure of $\mathcal{S}_{\hat{\bx}}$. 
From both assumptions, $D$ is a compact forward-invariant set of $(G, \bk)$ and there exists a Lyapunov function $\V(\bx)$ such that
\[
\dot{\V} (\bx) = \nabla{\V}(\bx)\cdot f(\bx) \leq 0
\ \text{ for all $\bx\in D$}.
\]
From LaSalle’s invariance principle,  every solution $\bx(t)$ with initial condition $\bx_0 \in \mathcal{S}_{\hat{\bx}}$ satisfies that
\[
\emptyset \neq \omega(\bx_0) \subseteq \big\{ \bx \in \Theta \mid \dot{\V} (\bx) = 0 \big\}.
\]
Recall from assumption $(b)$ that $\dot{\V} (\bx) < 0$ for all $\bx \in \mathcal{S}_{\hat{\bx}}$.
It follows that the trajectory $\bx(t)$ converges to the boundary of $\mathcal{S}_{\hat{\bx}}$, that is, $\omega(\bx_0) \subseteq \partial \R^n_{\geq 0}$. 
\end{proof}

\begin{example}
\label{ex:LaSalles_1}

Recall the reaction network $G$ shown in Figure~\ref{fig:LaSalles_1}.  
We now incorporate the reaction rate constants, which yields the following:
\begin{equation} \notag
\begin{split}
X_1 + X_2 \xrightarrow[]{k_1} 2X_1 \qquad
X_2 + X_3 \xrightarrow[]{k_2} X_1 + X_3 \qquad
X_1 + X_3 \xrightarrow[]{k_3} 2X_1 
\end{split}
\end{equation}
The corresponding mass-action system $(G, \bk)$ is given by
\begin{equation} \label{eq1:LaSalles_1}
\begin{split}
\frac{d x_1}{d t} & = k_1 x_1 x_2 + k_2 x_2 x_3 + k_3 x_1 x_3, \\
\frac{d x_2}{d t} & = - k_1 x_1 x_2 - k_2 x_2 x_3, \\
\frac{d x_3}{d t} & = - k_3 x_1 x_3. 
\end{split}
\end{equation}
Let $\bx = (x_1, x_2, x_3)$. Consider the function $\V (\bx) = - x_1 + \frac{1}{2}(x_2 + x_3)$.
By direct computation, we obtain
\begin{equation} \label{eq2:ex4}
\begin{split}
\dot{\V} (\bx) 
& = (-1 , \frac{1}{2}, \frac{1}{2})\cdot (k_1 x_1 x_2 + k_2 x_2 x_3 + k_3 x_1 x_3, - k_1 x_1 x_2 - k_2 x_2 x_3, - k_3 x_1 x_3 ) 
\\& = -\frac{3}{2}( k_1 x_1 x_2 + k_2 x_2 x_3)- \frac{1}{2} k_3 x_1 x_3.
\end{split}
\end{equation}
This implies that $\dot{\V} (\bx) \leq 0$ for all $x \in \R^3_{\geq 0}$ and $\dot{\V} (\bx) = 0$ if and only if $f(\bx) = 0$.

Following Example~\ref{ex:stoichiometric_compatibility_class}, a stoichiometric compatibility class of \eqref{eq1:LaSalles_1} is given by
\[
\mS_{\alpha} := \{ \bx \in \R^3_{>0} \mid x_1 + x_2 + x_3 = \alpha \}
\ \text{ with $\alpha > 0$}.
\]
Then $\cl(\mS_{\alpha})$ is a compact and forward-invariant set of $(G, \bk)$.
Let $\bx(t)$ be a trajectory of $(G, \bk)$ with initial condition $\bx_0 \in \mS_{\alpha}$.
From \eqref{eq2:ex4}, $\V(\bx)$ is a strict Lyapunov function on $\cl(\mS_{\alpha})$ of $(G, \bk)$.
Using Theorem~\ref{thm:lasalle_unique}, we have
\begin{equation} \notag
\lim\limits_{t\to\infty} \bx (t) = \bx^*
\ \text{ for some non-negative equilibrium $\bx^* = (x^*_1, x^*_2, x^*_3) \in \cl (\mathcal{S}_{\alpha})$}.
\end{equation} 
By direct computation, there are three non-negative equilibria in $\cl(\mS_{\alpha})$, namely $(\alpha, 0, 0)$, $(0, \alpha, 0)$, and $(0, 0, \alpha)$.
Moreover, from \eqref{eq1:LaSalles_1} we have
\[
\frac{d x_2}{d t} < 0 
\ \text{ and } \
\frac{d x_3}{d t} < 0
\ \text{ for all $\bx \in \mS_{\alpha}$}.
\]
This further implies that $x^*_2 < \alpha, x^*_3 < \alpha$, and hence $\bx^* = (\alpha, 0, 0)$. 
Therefore, for any trajectory $\bx(t)$ of $(G, \bk)$ with initial condition $\bx_0 \in \mS_{\alpha}$,
\begin{equation} \notag
\lim\limits_{t\to\infty} \bx (t) = (\alpha, 0, 0).
\end{equation}
\qed
\end{example}

The following example illustrates the necessity of all hypotheses in Theorem~\ref{thm:lasalle_unique}. In particular, it shows that if the strictness condition on the Lyapunov function is removed, trajectories may fail to converge to an equilibrium.

\begin{example}[Modified Ivanova network]
\label{ex:modified_ivanova}

\begin{figure}[!ht]
\centering
\subfloat[]{
    \includegraphics[scale=.5]{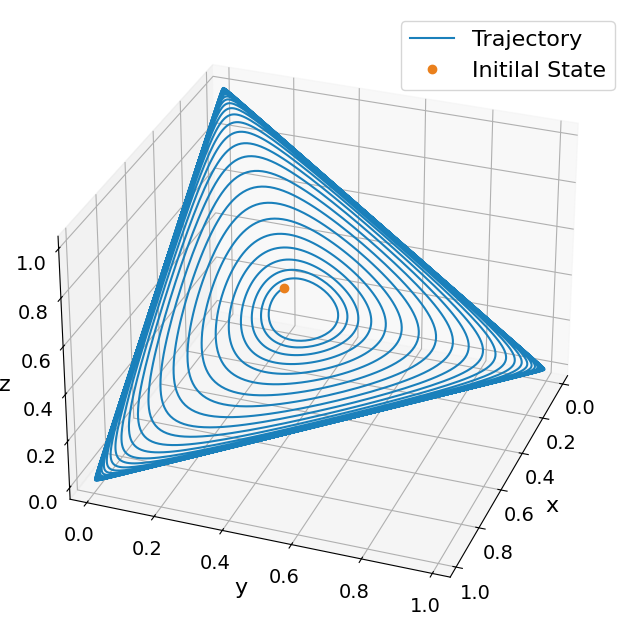}
    \label{fig:modified_ivanova_traj}
}
\vspace{1em}
\subfloat[]{
    \includegraphics[scale=.5]{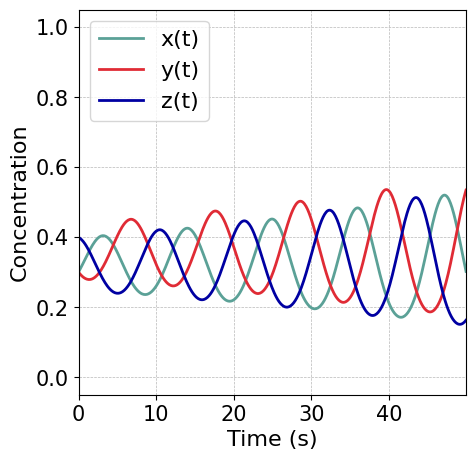}
    \label{fig:modified_ivanova_series_1}
}
\subfloat[]{
    \includegraphics[scale=.5]{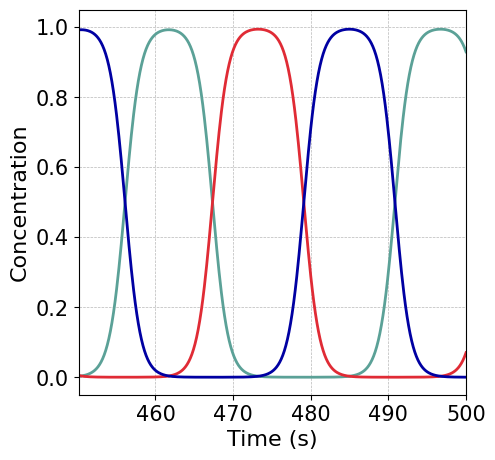}
    \label{fig:modified_ivanova_series_2}
}
\hfill
\caption{
(a) Phase space portrait of the modified Ivanova network. 
(b) Concentration of species $X$, $Y$, and $Z$ as a function of time (for the first few instances of time). (c) Concentration of species $X$, $Y$, and $Z$ as a function of time (for later instances of time).
}
\label{fig:modified_ivanova}
\end{figure}

The original Ivanova network consists of the following reactions:
\begin{equation} \notag
X + Y \rightarrow 2Y \qquad
Y + Z \rightarrow 2Z \qquad
X + Z \rightarrow 2X
\end{equation}
This network is known to exhibit periodic trajectories. 
We modify the Ivanova network by adding the reaction $2X + Y + Z \rightarrow 3X + Y$ and set all reaction rate constants to $1$, which yields the extended network:
\begin{equation} \label{eq:modified_ivanova}
X + Y \xrightarrow[]{1} 2Y \qquad
Y + Z \xrightarrow[]{1} 2Z \qquad
X + Z \xrightarrow[]{1} 2X \qquad
2X + Y + Z \xrightarrow[]{1} 3X + Y
\end{equation}
The corresponding mass-action system $(G, \bk)$ is given by
\begin{equation} \label{eq1:modified_ivanova}
\begin{split}
\frac{d x}{d t} & =  -xy + xz + x^2 yz, \\
\frac{d y}{d t} & =  xy  -yz, \\
\frac{d z}{d t} & = yz  - xz -x^2yz
\end{split}
\end{equation}

From \eqref{eq1:modified_ivanova}, we have $\frac{d }{d t} (x + y + z) \equiv 0$.
Let $\bx_0 = (x (0), y (0), z (0)) = (0.4, 0.3, 0.3)$ and consider the associated stoichiometric compatibility class
\[
\mS_{\bx_0} := \{ \bx \in \R^3_{>0} \mid x + y + z = 1 \},
\]
whose closure $\cl (\mS_{\bx_0})$ is a compact forward-invariant set for the system \eqref{eq1:modified_ivanova}.
By direct computation, there are four nonnegative equilibria in $\cl(\mS_{\bx_0})$, given by
\[
(1,0,0), \ \
(0,1,0), \ \
(0,0,1), \ \ 
(0.32, 0.34, 0.32).
\]
Numerical simulation (see Figure~\ref{fig:modified_ivanova_traj}) shows that the trajectory with initial condition $\bx_0$ evolves along outward spiraling paths. 
In other words, this trajectory exhibits weak extinction (but not strong extinction).
In particular, it satisfies that $\omega(\bx_0) = \partial \mS_{\bx_0}$, i.e., the omega-limit set is the entire boundary of the triangular stoichiometric compatibility class. 
This spiraling behavior implies that any Lyapunov function $\V(\bx)$ (if one exists) must be constant along the boundary, i.e., $\dot{\V} (\bx) = 0$ for all $\bx \in \partial \mS_{\bx_0}$.
Since points on this boundary are not all equilibria, any such Lyapunov function $\V(\bx)$ cannot be strict.

This shows that a key hypothesis of Theorem~\ref{thm:lasalle_unique} (namely, that the Lyapunov function be strict on the forward-invariant set) is not satisfied. Consequently, although the system has finitely many equilibria in $\cl(\mS_{\bx_0})$, the trajectory does not converge to any of these equilibria; instead, it spirals toward the boundary of the triangle.
\qed
\end{example}

\begin{remark}

For the mass-action system $(G, \bk)$ of the modified Ivanova network in Example~\ref{ex:modified_ivanova}, under the initial condition $\bx_0 = (0.4, 0.3, 0.3)$, the trajectory spirals toward the boundary of the triangular stoichiometric compatibility class. Consequently, all species $X, Y, Z$ exhibit weak extinction, while none of them exhibit strong extinction.
\end{remark}

\subsection{Lyapunov Functions for Deficiency-Zero, Non-Weakly Reversible Networks}
\label{sec:non_wr_lyapunov}

In this section, we analyze deficiency-zero networks that are not weakly reversible and present an explicit construction of Lyapunov functions for the associated systems.

We begin by recalling Stiemke’s Theorem, which will be used later in the subsequent analysis.

\begin{theorem}[Stiemke's Theorem {\cite{Rockafellar1970}}]
\label{thm:stiemke}

Let $\bu_1,\bu_2,...,\bu_k \in \mathbb{R}^n$. 
Then exactly one of the following statements holds:
\begin{enumerate}[label=(\alph*)]
\item There exists constants $c_1,c_2,....,c_k\in\mathbb{R}_{>0}$ such that 
\[
\sum\limits_{i=1}^k c_i\bu_i = \mathbf{0}.
\]

\item There exists a vector $\bw \in \mathbb{R}^n$ such that
\[
\bw \cdot \bu_i \leq 0
\ \text{ for all } \ 
1 \leq i \leq k,
\]
with strict inequality holding for at least one $i$.
\end{enumerate} 
\end{theorem}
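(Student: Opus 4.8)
The plan is to establish the ``exactly one'' dichotomy in two stages: first that (a) and (b) are mutually exclusive, and then that if (a) fails, (b) must hold (so that at least one always holds). The exclusivity is immediate. Suppose both held, with positive scalars $c_1, \ldots, c_k$ satisfying $\sum_{i=1}^k c_i \bu_i = \mathbf{0}$ and a vector $\bw$ with $\bw \cdot \bu_i \leq 0$ for all $i$ and $\bw \cdot \bu_j < 0$ for some $j$. Pairing the relation $\sum_i c_i \bu_i = \mathbf{0}$ with $\bw$ gives $\sum_{i=1}^k c_i (\bw \cdot \bu_i) = 0$; but every summand is $\leq 0$ and the $j$-th summand is strictly negative since $c_j > 0$, forcing the sum to be negative, a contradiction.

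For the existence half I would use strict separation of convex sets. Introduce the linear map $U \colon \mathbb{R}^k \to \mathbb{R}^n$ defined by $U(c_1, \ldots, c_k) = \sum_{i=1}^k c_i \bu_i$, so that statement (a) asserts $\ker U \cap \mathbb{R}^k_{>0} \neq \emptyset$. The crucial observation is that statement (b) can be read entirely inside $\mathbb{R}^k$: the vector $(\bw \cdot \bu_1, \ldots, \bw \cdot \bu_k)$ equals $U^{\intercal} \bw$, so (b) says that $\im(U^{\intercal})$ contains a nonzero vector with all coordinates $\leq 0$. Since $\im(U^{\intercal}) = (\ker U)^{\perp}$ is a subspace and is therefore closed under negation, this is equivalent to $(\ker U)^{\perp}$ containing a nonzero \emph{nonnegative} vector. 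Writing $L := (\ker U)^{\perp}$, the theorem reduces to the symmetric statement that for a subspace $L \subseteq \mathbb{R}^k$, either $L^{\perp}$ meets the open orthant $\mathbb{R}^k_{>0}$ or $L$ contains a nonzero vector in $\mathbb{R}^k_{\geq 0}$.

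Now assume (b) fails, so $L$ contains no nonzero nonnegative vector; equivalently, $L$ is disjoint from the probability simplex $\Delta := \{\bv \in \mathbb{R}^k_{\geq 0} : \sum_i v_i = 1\}$, since any nonzero nonnegative vector of $L$ rescales into $\Delta$. As $L$ is closed and convex and $\Delta$ is compact and convex, the strict separating-hyperplane theorem yields $\boldsymbol{p} \in \mathbb{R}^k$ and $\alpha \in \mathbb{R}$ with $\boldsymbol{p} \cdot \bx < \alpha < \boldsymbol{p} \cdot \bv$ for all $\bx \in L$ and $\bv \in \Delta$. Because $L$ is a subspace, the quantity $\boldsymbol{p} \cdot \bx$ can be bounded above on $L$ only if it vanishes identically, so $\boldsymbol{p} \in L^{\perp} = \ker U$ and consequently $\alpha > 0$; evaluating the right-hand inequality at the standard basis vectors $\mathbf{e}_i \in \Delta$ gives $p_i > \alpha > 0$ for every $i$. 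Hence $\boldsymbol{p} \in \ker U \cap \mathbb{R}^k_{>0}$, which is precisely statement (a) with $c_i = p_i$. I expect the main obstacle to be the bookkeeping in this reduction: correctly transporting both alternatives into the single space $\mathbb{R}^k$ via $U$ and $U^{\intercal}$, and in particular justifying that the separating functional must \emph{vanish} on the subspace $L$ rather than merely be bounded, since it is exactly this that upgrades the weak separating inequality into the strict coordinatewise positivity $p_i > 0$ demanded by (a).
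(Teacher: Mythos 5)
Your proof is correct. Note that the paper itself gives no argument for this statement---it is imported verbatim from Rockafellar as a known result---so there is no internal proof to compare against; what you have written is essentially the standard separating-hyperplane proof of Stiemke's lemma. The two delicate points are both handled properly: the strict separation theorem is applied to a closed convex set ($L$, a subspace) and a \emph{compact} convex set (the simplex $\Delta$), which is exactly the hypothesis pairing that guarantees a strict gap; and the passage from ``$\boldsymbol{p}\cdot\bx$ bounded above on $L$'' to ``$\boldsymbol{p}\perp L$'' correctly exploits that $L$ is a subspace, which is what forces $\alpha>0$ and hence $p_i>\alpha>0$ at the vertices $\mathbf{e}_i$ of $\Delta$. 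The bookkeeping you flagged as the main risk---identifying alternative (b) with the existence of a nonzero nonnegative vector in $\im(U^{\intercal})=(\ker U)^{\perp}$, using closure of a subspace under negation to flip the sign convention---is also carried out correctly, and the mutual-exclusivity half is the immediate pairing argument one expects. No gaps.
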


\begin{theorem}
\label{thm:stronger_not_consistent}
Let $G = (V, E)$ be a reaction network. Then the following statements are equivalent:
\begin{enumerate}[label=(\alph*)]
\item $G$ is not consistent.

\item There exists a vector $\bw \in \mathbb{R}^n$ such that $\bw \cdot (\by' - \by) \leq 0$ for every reaction $\by \rightarrow \by' \in E$, and the inequality is strict for at least one reaction in $E$.

\item For all choices of rate constants $\bk$, there exists a linear Lyapunov function $\V$ for the mass-action system $(G, \bk)$ such that $\dot{\V}(\bx(t)) < 0$ for every $\bx \in \mathbb{R}^n_{>0}$.

\item For all choices of rate constants $\bk$, the mass-action system $(G, \bk)$ does not admit a positive equilibrium.
\end{enumerate} 
\end{theorem}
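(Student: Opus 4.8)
The plan is to establish the cycle of implications $(a)\Leftrightarrow(b)\Rightarrow(c)\Rightarrow(d)\Rightarrow(a)$, which closes all four statements into a single equivalence. The conceptual engine is Stiemke's Theorem (Theorem~\ref{thm:stiemke}) applied to the family of reaction vectors, together with the observation that the separating vector it produces is exactly the gradient of a linear Lyapunov function that works \emph{uniformly} over all rate constants.

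First I would record $(a)\Leftrightarrow(b)$ as a direct translation of Stiemke's Theorem. Taking the vectors $\bu_i$ to be the reaction vectors $\{\by'-\by : \by\to\by'\in E\}$, alternative (a) of Theorem~\ref{thm:stiemke} (existence of positive $c_i$ summing the $\bu_i$ to zero) is precisely the definition of $G$ being consistent, while alternative (b) of Theorem~\ref{thm:stiemke} is verbatim statement (b) here. Since Stiemke's dichotomy is exclusive and exhaustive, ``$G$ is not consistent'' holds if and only if statement (b) holds. Here there are no self-loops, so every reaction vector is nonzero, and Stiemke applies without degeneracy.

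Next, for $(b)\Rightarrow(c)$, given the vector $\bw$ from (b) I would set $\V(\bx)=\bw\cdot\bx$. This is continuously differentiable, and since the strict inequality in (b) forces $\bw\neq\mathbf{0}$, it is non-constant, hence a legitimate candidate Lyapunov function. Differentiating along \eqref{eq:mas_ds} gives
\[
\dot{\V}(\bx)=\bw\cdot\sum_{\by\to\by'\in E} k_{\by\to\by'}\bx^{\by}(\by'-\by)=\sum_{\by\to\by'\in E} k_{\by\to\by'}\,\bx^{\by}\,\big(\bw\cdot(\by'-\by)\big).
\]
For $\bx\in\RR^n_{>0}$ each factor $k_{\by\to\by'}\bx^{\by}$ is strictly positive, every bracket $\bw\cdot(\by'-\by)$ is $\le 0$, and at least one bracket is strictly negative; hence $\dot{\V}(\bx)<0$ on $\RR^n_{>0}$. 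Crucially, the \emph{same} $\bw$ serves for every choice of $\bk$, which is exactly the ``for all $\bk$'' content of (c).

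Finally I would close the cycle with two short arguments. For $(c)\Rightarrow(d)$: fix $\bk$ and let $\V$ be the linear Lyapunov function from (c); if a positive equilibrium $\bx^*$ existed, then $f(\bx^*)=0$ would give $\dot{\V}(\bx^*)=\nabla{\V}(\bx^*)\cdot f(\bx^*)=0$, contradicting $\dot{\V}(\bx^*)<0$. For $(d)\Rightarrow(a)$ I would argue the contrapositive: if $G$ is consistent with witnesses $\lambda_{\by\to\by'}>0$, then choosing rate constants $k_{\by\to\by'}=\lambda_{\by\to\by'}$ makes the all-ones vector $\mathbf{1}\in\RR^n_{>0}$ a positive equilibrium, since $\mathbf{1}^{\by}=1$ reduces the steady-state condition to $\sum_{\by\to\by'\in E}\lambda_{\by\to\by'}(\by'-\by)=0$; thus (d) fails. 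I do not anticipate a genuine technical obstacle here: the content is the clean alignment of Stiemke's alternatives with the network-theoretic definitions, and the one point that deserves care is verifying that the candidate $\bw\cdot\bx$ is non-constant (equivalently $\bw\neq\mathbf{0}$) and that its strict negativity is uniform in $\bk$.
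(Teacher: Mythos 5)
Your proposal is correct and follows essentially the same route as the paper: the identical chain $(a)\Rightarrow(b)\Rightarrow(c)\Rightarrow(d)\Rightarrow(a)$, with Stiemke's Theorem for $(a)\Rightarrow(b)$, the linear function $\bw\cdot\bx$ for $(b)\Rightarrow(c)$, and the rate constants $k_{\by\to\by'}=\lambda_{\by\to\by'}$ making $\mathbf{1}$ an equilibrium for $(d)\Rightarrow(a)$. Your added check that $\bw\neq\mathbf{0}$ (so $\V$ is non-constant, as the paper's Definition~\ref{def:lyapunov_function} requires) is a small point the paper leaves implicit.
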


\begin{proof}

We show the equivalence of the four statements by establishing the following chain of implications:
\[
(a) \Rightarrow (b) \Rightarrow (c) \Rightarrow (d) 
\Rightarrow (a).
\]

\smallskip

$(a) \Rightarrow (b)$: 
This follows directly from Theorem~\ref{thm:stiemke}. 

\smallskip

$(b) \Rightarrow (c)$: 
Define the linear function $\mathcal{V}(\bx) = \sum_{i=1}^n w_i x_i$. 
We claim that, for any choices of rate constants $\bk$, $\mathcal{V}$ is a Lyapunov function for the mass-action system $(G, \bk)$.
Differentiating $\mathcal{V}$ with respect to time along trajectories of $(G, \bk)$, we obtain
\[
\dot{\V} (\bx) = \frac{d \V}{d \bx} \cdot \frac{d\bx}{dt} = \bw \cdot \sum\limits_{\by \rightarrow \by'\in E} k_{\by \rightarrow \by'} \bx^{\by}(\by'- \by) = \sum\limits_{\by \rightarrow \by'\in E} k_{\by \rightarrow \by'} \bx^{\by} \big( \bw \cdot (\by' - \by) \big).
\]
From part $(b)$, $\bw \cdot (\by' - \by) \leq 0$ for every reaction $\by \rightarrow \by' \in E$, and there exists at least one reaction for which the inequality is strict.
Hence, 
\[
\dot{\mathcal{V}}(\bx) < 0
\ \text{ for all } \
\bx \in \mathbb{R}_{>0}^n,
\]
and we prove the claim.

\smallskip

$(c) \Rightarrow (d)$: 
Given rate constants $\bk$, let $\mathcal{V}$ be a Lyapunov function for the dynamical system $(G, \bk)$, such that
\[
\dot{\V} (\bx) = \frac{d \V}{d \bx} \cdot \frac{d\bx}{dt} < 0
\ \text{ for all }
\bx \in \mathbb{R}_{>0}^n.
\]
This implies that the system has no positive equilibrium, since $\frac{d\bx}{dt} \neq 0$ for any $\bx \in \mathbb{R}^n_{>0}$.

\smallskip

$(d) \Rightarrow (a)$: 
We prove the contrapositive: if $G$ is consistent, then there exists a choice of rate constants $\bk$ such that the mass-action system $(G, \bk)$ admits a positive equilibrium.

Since $G$ is consistent, there exists positive scalars $\big\{ \lambda_{\by \to \by'} > 0 \mid \by \to \by' \in E \big\}$ such that
\begin{equation} \notag
\sum\limits_{\by\rightarrow \by'\in E}\lambda_{\by\rightarrow \by'} (\by' -\by) = 0.
\end{equation}
Define the rate constants $\bk = (k_{\by\rightarrow \by'})_{\by\rightarrow \by' \in E} \in \mathbb{R}^{|E|}_{>0}$ by
\[
k_{\by\rightarrow \by'} = \lambda_{\by \to \by'} > 0.
\]
Consider the mass-action system $(G, \bk)$ evaluated at the point $\bx^* = (1, \ldots, 1)$. We have
\[
\frac{d\bx}{dt} \mid_{\bx^* = (1, \ldots, 1)} = \sum\limits_{\by\rightarrow \by'\in E} k_{\by\rightarrow \by'} (\mathbf{1})^{\by} (\by' -\by) = 0.
\]
Therefore, $\bx^* = (1, \ldots, 1)$ is a positive equilibrium of the system $(G, \bk)$.
\end{proof}

\begin{remark}
\label{rmk:lyapunov_function}

Note that in Theorem~\ref{thm:stronger_not_consistent}, the implication $(c)\Rightarrow (d)$ does not rely on the Lyapunov function being \emph{linear}. In fact, the following statements are unexpectedly equivalent.
\begin{enumerate}[label=(\alph*)]
    \item For all choices of rate constants $\bk$, there exists a  {\em linear Lyapunov function} $\V$ for the mass-action system $(G, \bk)$ such that $\dot{\V}(\bx(t)) < 0$ for every $\bx \in \mathbb{R}^n_{>0}$.
    \item For all choices of rate constants $\bk$, there exists a {\em Lyapunov function} $\V$ for the mass-action system $(G, \bk)$ such that $\dot{\V}(\bx(t)) < 0$ for every $\bx \in \mathbb{R}^n_{>0}$.
\end{enumerate} 
\end{remark}

\begin{lemma}[\cite{gunawardena2003chemical}] 
\label{lem:terminal}

Let $G = (V, E)$ be a reaction network that is not weakly reversible. For each linkage class $L$ of $G$ that is not weakly reversible, it admits a decomposition into disjoint strongly connected components
\begin{equation} \notag
L = SC_1 \sqcup SC_2 \sqcup \cdots \sqcup SC_k,
\end{equation}
where each $SC_i$ is a strongly connected component of $L$.
Moreover, one or more of these components, but not all, is a terminal strongly connected component.
\end{lemma}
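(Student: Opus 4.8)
The plan is to reduce the statement to elementary facts about the \emph{condensation} of a directed graph. First I would recall that any directed graph admits a unique partition of its vertex set into strongly connected components, where two vertices lie in the same component precisely when each is reachable from the other by a directed path. Applying this to the subgraph induced by the linkage class $L$ yields the disjoint decomposition $L = SC_1 \sqcup \cdots \sqcup SC_k$. A point worth checking carefully is that each $SC_i$ is strongly connected in the sense of Definition~\ref{def:graphic_property}(b): for a component with at least two vertices, mutual reachability forces every internal reaction $\by \to \by'$ onto a directed cycle (concatenate the edge with a directed path from $\by'$ back to $\by$), while a singleton component contains no internal reaction and so satisfies the condition vacuously (recall that $G$ has no self-loops).

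Next I would form the condensation $\hat{L}$: contract each $SC_i$ to a single node and place a directed edge $SC_i \to SC_j$ (for $i \neq j$) whenever some reaction of $L$ has source in $SC_i$ and target in $SC_j$. By construction $\hat{L}$ is acyclic, and a component $SC_i$ is terminal in the sense of Definition~\ref{def:graphic_property}(c) exactly when its node is a sink of $\hat{L}$, i.e.\ has no outgoing edges. Here I would note that since $L$ is a full linkage class, every reaction with source in $SC_i$ stays inside $L$, so terminality relative to $G$ and relative to $L$ coincide, and the definition applies without ambiguity.

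The existence of at least one terminal component then follows from the standard fact that every finite nonempty acyclic digraph has a sink: following a directed path of maximal length in $\hat{L}$, its endpoint can have no outgoing edge, for otherwise the path could be extended or would close into a cycle. To see that not all components are terminal, I would invoke the hypothesis that $L$ is not weakly reversible: if $L$ consisted of a single strongly connected component, then (by the reachability argument of the first paragraph) every reaction of $L$ would lie on a directed cycle, making $L$ strongly connected and hence weakly reversible, a contradiction. Thus $k \geq 2$. Since $L$ is connected, $\hat{L}$ is weakly connected, and a weakly connected digraph on $k \geq 2$ nodes has at least one edge; the tail of such an edge is a non-sink, hence a non-terminal component.

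I expect no deep obstacle here, as the argument is essentially the sink/source structure of the condensation DAG. The only real care is bookkeeping: aligning the paper's ``every reaction lies on a directed cycle'' formulation of strong connectivity with the reachability-based strongly connected component decomposition (in particular, treating singleton components correctly), and confirming that the non-weak-reversibility of $L$ yields $k \geq 2$ rather than merely $k \geq 1$.
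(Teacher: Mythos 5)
Your argument is correct and complete. The paper does not prove this lemma at all---it is stated with a citation to \cite{gunawardena2003chemical} and used as a known fact---so there is no internal proof to compare against; your condensation-DAG argument (existence of a sink in a finite acyclic digraph gives a terminal component, and $k \geq 2$ together with weak connectivity of the condensation gives a non-terminal one) is the standard way to establish it. You also handle the two points that actually require care: reconciling the paper's ``every reaction lies on a directed cycle'' formulation of strong connectivity with the reachability-based component decomposition (including singleton components), and deriving $k \geq 2$ from the hypothesis that $L$ is not weakly reversible.
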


From Theorem~\ref{thm:stronger_not_consistent}, every inconsistent reaction network $G$ admits a linear Lyapunov function for the mass-action system $(G, \bk)$. 
In the following, we provide a detailed construction of such a Lyapunov function for every deficiency-zero network that is not weakly reversible. 

\begin{proof}[Proof of Proposition \ref{prop:def_0_non_wr}]

From Theorem~\ref{thm:stronger_not_consistent}, it suffices to find a vector $\bw \in \mathbb{R}^n$ such that $\bw \cdot (\by' - \by) \leq 0$ for every reaction $\by \rightarrow \by' \in E$, and the inequality is strict for at least one reaction in $E$.
Given such a vector $\bw$, the proof of Theorem \ref{thm:stronger_not_consistent} shows that $\mathcal{V}(\bx) = \sum_{i=1}^n w_i x_i$ is a linear Lyapunov function for the mass-action system $(G, \bk)$.

We divide the analysis into two cases: (a) $G$ has a single linkage class, and (b) $G$ has multiple linkage classes.

\smallskip

\textbf{Case 1: $G$ has a single linkage class. }
From Lemma \ref{lem:terminal}, $G$ can be decomposed into disjoint strongly connected components:

\begin{equation} \notag
V = SC_1 \sqcup SC_2 \sqcup \cdots \sqcup SC_k.
\end{equation}
Moreover, since $G$ is not weakly reversible, there exists at least one terminal strongly connected component, which we denote without loss of generality by $SC_1 \subsetneq V$. Specifically, for every reaction $\by \to \by' \in E$, if $\by \in SC_1$, then $\by' \in SC_1$. 

Let $V_1$ be a connected component of $V \setminus SC_1$, and define $V_2 = V \setminus V_1$.
We now consider two reaction networks, denoted by $G_1 = (V_1, E_1)$ and $G_2 = (V_2, E_2)$.
The first reaction network $G_1$ consists of the vertex set $V_1$ and all reactions whose source and target vertices both lie in $V_1$.
The second reaction network $G_2$ consists of the vertex set $V_2$ and all reactions whose source and target vertices both lie in $V_2$.
Since $G$ has a single linkage class and $SC_1$ is a terminal strongly connected component of $G$, it follows that every connected component of $V \setminus ( SC_1 \cup V_1 )$ can connect only to $SC_1$. Hence, $G_2$ has a single linkage class and is therefore connected.

Let $\mS_{G}, \mS_{G_1}, \mS_{G_2}$ denote the stoichiometric subspaces corresponding to the reaction networks $G$, $G_1$, and $G_2$, respectively. Since $G$ has a single linkage class and $V_1$ is a terminal strongly connected component of $G$, there exists a reaction $\by_2 \to \by_1 \in E$ such that $\by_2 \in V_2$ and $\by_1 \in V_1$.
Since both $G_1$ and $G_2$ are connected, for any reaction $\by \to \by' \in E$, we can express the reaction vector as 
\[
\by' - \by' = \by' - \by_1 + (\by_1 - \by_2) + \by_2 - \by'.
\]
This implies that
\begin{equation} \label{eq2:def_0_non_wr}
\mS_{G} = \mS_{G_1} + \mS_{G_2} + \spn \{ \by_1 - \by_2 \}.
\end{equation}
Since $G$ is a deficiency-zero network with a single linkage class, Definition~\ref{def:deficiency} gives
\[
\dim (S_G) = |V| - 1.
\]
On the other hand, $G_1$ and $G_2$ satisfy that
\[
\dim (\mS_{G_1}) \leq |V_1| - 1
\ \text{ and } \
\dim (\mS_{G_2}) \leq |V_2| - 1.
\]
Combining these with \eqref{eq2:def_0_non_wr} and the identity $|V| = |V_1| + |V_2|$, we derive
\[
\dim (S_G) = \dim (\mS_{G_1}) + \dim (\mS_{G_2}) + 1.
\]
Hence, there exists a vector $\bw \in S_G$ such that
\[
\bw \perp \mS_{G_1} + \mS_{G_2}.
\]
In particular, we can choose $\bw$ so that $\bw \cdot (\by_1 - \by_2) < 0$. From \eqref{eq2:def_0_non_wr}, it follows that
\[
\bw \cdot (\by' - \by) \leq 0
\ \text{ for all } \
\by \rightarrow \by' \in E,
\]
with strict inequality for at least one reaction (specifically, $\by_2 \to \by_1$) in $E$.

\smallskip

\textbf{Case 2: $G$ has multiple linkage classes. }
Now suppose that $G$ has $\ell > 1$ linkage classes. Then $G$ can be decomposed as
\begin{equation} \label{eq3:def_0_non_wr}
V = L_1 \sqcup L_2 \sqcup \cdots \sqcup L_k,
\end{equation}
where each $L_i$ is a linkage class of $G$.
Since $G$ is not weakly reversible, there exists at least one linkage class that is not weakly reversible. Without loss of generality, we denote this linkage class by $L_1$.

We proceed as in the single linkage class case, now applied to $L_1$.
Let $SC_1 \subsetneq L_1$ be a terminal strongly connected component of $L_1$, and let $V_1$ be a connected component of $L_1 \setminus SC_1$.
Define $V_2 = V \setminus V_1$, we consider two reaction networks:
\[
G_1 = (V_1, E_1)
\ \text{ and } \
G_2 = (V_2, E_2),
\]
where $G_1$ consists of the vertex set $V_1$ and all reactions whose source and target vertices both lie in $V_1$, while $G_2$ consists of the vertex set $V_2$ and all reactions whose source and target vertices both lie in $V_2$.

From Lemma~\ref{lem:deficiency_zero}, the stoichiometric subspaces corresponding to distinct linkage classes are linearly independent.
Analogous to the single linkage class case, we deduce that
\[
\mS_{G} = \mS_{G_1} + \mS_{G_2} + \spn \{ \by_1 - \by_2 \},
\]
and moreover,
\[
\dim (S_G) = \dim (\mS_{G_1}) + \dim (\mS_{G_2}) + 1.
\]
The rest of the proof proceeds as in Case 1. We may choose a vector $\bw \in S_G$ such that $\bw \perp \mS_{G_1} + \mS_{G_2}$ and $\bw \cdot (\by_1 - \by_2) < 0$. It then follows that
\[
\bw \cdot (\by' - \by) \leq 0
\ \text{ for all } \
\by \rightarrow \by' \in E,
\]
with strict inequality for at least one reaction 
(specifically, $\by_2 \to \by_1$) 
in $E$.
\end{proof}

The following result is a direct consequence of Theorem \ref{thm:stronger_not_consistent} and Proposition~\ref{prop:def_0_non_wr}. It was first proved by a different method in \cite{horn1972general}.

\begin{theorem}
\label{thm:def_zero_non_wr}

Let $G = (V, E)$ be a deficiency-zero reaction network that is not weakly reversible. Then for any choice of rate constants $\bk$, the mass-action system $(G, \bk)$ does not admit a positive equilibrium.
\end{theorem}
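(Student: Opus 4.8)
The plan is to obtain this statement as an immediate corollary of the two results that precede it, with essentially no new computation. Proposition~\ref{prop:def_0_non_wr} already does the genuine work: for the given deficiency-zero, non--weakly reversible network $G$ and \emph{any} rate constants $\bk$, it produces a linear Lyapunov function $\V$ satisfying $\dot{\V}(\bx) < 0$ for all $\bx \in \mathbb{R}^n_{>0}$. This is precisely statement (c) of Theorem~\ref{thm:stronger_not_consistent}.

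First I would invoke Proposition~\ref{prop:def_0_non_wr} to secure statement (c). Then I would apply the chain of equivalences in Theorem~\ref{thm:stronger_not_consistent} --- specifically the implication (c) $\Rightarrow$ (d) --- to conclude that $(G, \bk)$ admits no positive equilibrium for any $\bk$, which is exactly the assertion of the theorem. Alternatively, the implication can be made self-contained in a single line: since $\dot{\V}(\bx) = \nabla\V(\bx) \cdot f(\bx) < 0$ everywhere on $\mathbb{R}^n_{>0}$, the right-hand side $f(\bx)$ of the mass-action system can never vanish on $\mathbb{R}^n_{>0}$, so no positive steady state exists.

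There is essentially no obstacle here, precisely because the nontrivial step --- the geometric construction of the separating vector $\bw$ with $\bw \cdot (\by' - \by) \leq 0$ for all reactions and strict inequality for at least one --- was already carried out in the proof of Proposition~\ref{prop:def_0_non_wr}, using the deficiency-zero dimension count together with the terminal strongly connected component guaranteed by Lemma~\ref{lem:terminal}. The only point worth stating explicitly is that the conclusion holds uniformly over all choices of $\bk$, which is built into both Proposition~\ref{prop:def_0_non_wr} and statement (d): once all reaction vectors lie in a closed half-space with at least one strictly in its interior, no positive combination of them with positive coefficients $k_{\by \to \by'} \bx^{\by}$ can cancel, so no rate constants can produce a positive equilibrium.
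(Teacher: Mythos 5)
Your proposal is correct and matches the paper exactly: the paper presents this theorem as a direct consequence of Proposition~\ref{prop:def_0_non_wr} (which supplies statement (c) of Theorem~\ref{thm:stronger_not_consistent}) followed by the implication (c) $\Rightarrow$ (d). No further comment is needed.
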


\begin{remark}[\cite{boros2019existence, deshpande2022source}]
\label{rmk:wr_consistent}

On the other hand, it was shown in \cite{deshpande2022source} that if a mass-action system $(G, \bk)$ does not admit a positive equilibrium for any choice of rate constants $\bk$, then the reaction network $G$ is not weakly reversible. Consequently, every weakly reversible reaction network $G$ is consistent. 
Moreover, it has been shown that every weakly reversible mass-action system possesses a positive equilibrium within each stoichiometric compatibility class \cite{boros2019existence}.
\end{remark}

\subsection{Extinction for Two Classes of Reaction Networks}
\label{sec:extinction_two_class}

In this section,  we identify two classes of reaction networks in which at least one species goes extinct by using Lyapunov functions and LaSalle's invariance principle.

First, we show that every deficiency-zero, non–weakly reversible network admits a species exhibiting weak extinction. 

\begin{proof}[Proof of Theorem \ref{thm:weak_extinction}]

Since $G$ is a deficiency-zero reaction network that is not weakly reversible, Proposition~\ref{prop:def_0_non_wr} shows that $(G, \bk)$ admits a linear Lyapunov function $\V(\bx) = \bw \cdot \bx$ such that 
\begin{equation} \label{eq1:weak_extinction}
\frac{d}{dt} \V(\bx(t)) = \bw \cdot \frac{d\bx}{dt} 
= \sum\limits_{\by \to \by' \in E} k_{\by\rightarrow\by'}{\bx}^{\by} \bw \cdot (\by'-\by) < 0 
\ \text{ for all } \ 
\bx \in \mathbb{R}^n_{>0}.
\end{equation}
In particular, we have $\bw \cdot \bf (\by' - \by) \leq 0$ for all $\by \rightarrow \by' \in E$, where the inequality is strict for at least one reaction. 
This, together with \eqref{eq1:weak_extinction}, implies that 
\[
\frac{d}{dt} 
\V(\bx(t)) = 0
\ \text{ only if } \ \bx \in \partial \R^n_{>0}.
\]

Given an initial condition $\bx_0$ whose stoichiometric compatibility class $\mathcal{S}_{\bx_0}$ is bounded, LaSalle’s invariance principle implies that the corresponding solution $\bx(t)$ satisfies
\[
\emptyset \neq \omega(\bx_0) \subseteq \big\{ \bx \in \cl (\mathcal{S}_{\bx_0}) \cap \R^n_{\geq 0} \mid \dot{\V} (\bx) = 0 \big\}.
\]
Therefore, $\omega(x(0))\subseteq \partial \R^n_{>0}$ and there exists a species $X_i$ such that $\liminf\limits_{t \to \infty} x_i (t) = 0$.
\end{proof}

\begin{lemma}[Barbalat’s Lemma {\cite{Khalil2017}}]\label{lem:barbalat}

Let $f(t)$ be a continuously differentiable function satisfying the following conditions:
\begin{enumerate}[label=(\alph*)]
\item $\lim\limits_{t \to \infty} f(t) = c$, where $c$ is is a finite number.

\item The derivative $f' (t)$ is uniformly continuous.
\end{enumerate} 
Then $\lim\limits_{t \to \infty} f' (t) = 0$.
\end{lemma}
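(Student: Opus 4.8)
The plan is to argue by contradiction, using the fundamental theorem of calculus to convert a pointwise lower bound on $f'$ into a lower bound on the increment of $f$, which will then contradict the convergence of $f$. Suppose that $f'(t) \not\to 0$ as $t \to \infty$. Then there exist $\varepsilon > 0$ and an increasing sequence $t_n \to \infty$ such that $|f'(t_n)| \geq \varepsilon$ for every $n$. The goal of the argument is to show that such persistent large values of $f'$ force $f$ to accumulate nonvanishing increments, which a convergent function cannot sustain.

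The key step is to promote the pointwise bound at the isolated points $t_n$ to a uniform bound over a whole interval of fixed length, and this is exactly where uniform continuity of $f'$ enters. By uniform continuity, I would choose $\delta > 0$ so that $|s - t| \leq \delta$ implies $|f'(s) - f'(t)| < \varepsilon/2$. Then for every $s \in [t_n, t_n + \delta]$,
\[
|f'(s)| \geq |f'(t_n)| - |f'(s) - f'(t_n)| > \varepsilon - \frac{\varepsilon}{2} = \frac{\varepsilon}{2}.
\]
Moreover, since $f'$ varies by less than $\varepsilon/2 \leq |f'(t_n)|$ across this interval, it cannot cross zero there; hence $f'$ maintains a constant sign on $[t_n, t_n + \delta]$.

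Next I would integrate over each such window. By the fundamental theorem of calculus together with the constant-sign property,
\[
|f(t_n + \delta) - f(t_n)| = \left| \int_{t_n}^{t_n + \delta} f'(s)\, ds \right| = \int_{t_n}^{t_n + \delta} |f'(s)|\, ds \geq \frac{\varepsilon \delta}{2},
\]
so the increment of $f$ over each interval $[t_n, t_n + \delta]$ is bounded below by the fixed positive constant $\varepsilon\delta/2$. On the other hand, since $\lim_{t\to\infty} f(t) = c$ is finite, the Cauchy criterion guarantees that for all sufficiently large $n$ we have $|f(t_n + \delta) - f(t_n)| < \varepsilon\delta/4$. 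These two estimates are incompatible, which is the desired contradiction; therefore $f'(t) \to 0$.

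The main obstacle in making this rigorous is the sign-preservation step: without it, cancellation inside $\int_{t_n}^{t_n+\delta} f'$ could keep the increment of $f$ small even when $|f'|$ is large, and the whole argument would collapse. Uniform continuity is precisely what forbids such oscillation on a fixed-length window, and it is genuinely essential—the hypothesis cannot be dropped, since a function can converge while its derivative develops ever taller spikes of shrinking width, so that $f'$ fails to be uniformly continuous and does not tend to $0$.
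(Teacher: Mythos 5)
Your proof is correct: it is the standard contradiction argument for Barbalat's lemma, and you handle the one genuinely delicate step (sign preservation of $f'$ on the window $[t_n, t_n+\delta]$, which rules out cancellation in the integral) properly, so the estimate $|f(t_n+\delta)-f(t_n)| \geq \varepsilon\delta/2$ stands and contradicts convergence of $f$. Note that the paper itself offers no proof of this lemma --- it simply cites Khalil's textbook --- so there is nothing internal to compare against; your argument coincides with the classical proof found in that reference, and your closing observation that uniform continuity cannot be dropped (spikes of shrinking width) is also accurate.
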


Next, we prove that linear non–weakly reversible networks exhibit strong extinction in species that do not belong to a terminal strongly connected component.

\begin{proof}[Proof of Theorem \ref{thm:strong_extinction}]

Since every vertex of $G$ is either $\emptyset$ or of the form $X_i$, species in distinct linkage classes do not interact, and the associated systems within each linkage class are independent. 
We can therefore assume, without loss of generality, that $G$ has a single linkage class, since if the result holds for a single linkage class, the same analysis applies to each class individually, and the conclusions extend to the entire system.

Since the reaction network $G$ is not weakly reversible, Lemma \ref{lem:terminal} implies that $G$ contains both terminal and non-terminal strongly connected components. Let $T$ denote the union of terminal strongly connected components, and let $N$ denote the union of non-terminal strongly connected components of $G$.
Hence, there exists at least one reaction with a source species in $N$ and a target species in $T$, but no reaction in the reverse direction.
Moreover, since $\emptyset$ can only appear as a target vertex, then $\emptyset \in T$ if $\emptyset \in V$

Given any $\alpha > 0$, consider the set  
\[
D := \{ \bx \in \mathbb{R}^n_{\geq 0} \mid x_1 + \cdots + x_n \leq \alpha \}.
\]
By assumption, every reaction of $G$ satisfies
\[
(1, 1, \ldots, 1) \cdot (\by' - \by) \leq 0
\ \text{ with } \
\by \rightarrow \by' \in E.
\]
It follows that $D$ is a compact forward-invariant set for $(G, \bk)$.
Define the function: 
\[
\V (\bx) := \sum\limits_{X_i \in N} \bx_i.
\]

We now claim that $\V (\bx)$ is a Lyapunov function for $(G, \bk)$ on $D$. By direct computation, we have
\begin{equation} \label{eq1:strong_extinction}
\dot{\V} (\bx)
= \frac{d}{dt} \big( \sum\limits_{X_i \in N} \bx_i \big) = \sum\limits_{X_i \in N} \frac{d\bx_i}{dt}.
\end{equation}
Note that if $X_i \to X_j \in E$ with $X_i, X_j \in N$, then the right-hand side of $\frac{d \bx_i}{dt}$ includes the term $- k_{X_i \to X_j} x_i$ while the right-hand side of $\frac{d \bx_j}{dt}$ includes the term $k_{X_i \to X_j} x_i$. Thus, these two terms cancel each other in $\dot{\V} (\bx)$. 
This, together with \eqref{eq1:strong_extinction}, implies that
\begin{equation} \label{eq2:strong_extinction}
\dot{\V} (\bx) = \sum\limits_{X_i\in N} \sum\limits_{\substack{\text{$X_j \in T$} \\ \text{$X_i\to X_j\in E$}}} - k_{X_i\to X_j} x_i \leq 0,
\end{equation}
which proves the claim.
From \eqref{eq2:strong_extinction}, we also deduce that $\dot{\V} (\bx) = 0$ if and only if $x_i = 0$ for all $X_i \in N$ with $X_i \to X_j \in E$ and $X_j \in T$ (we denote this subset of species by $N_1$).
From LaSalle's invariance principle, every solution $\bx(t)$ with initial condition $\bx_0 \in D$ satisfies 
\[
\emptyset \neq \omega(\bx_0) \subseteq \big\{ \bx \in D \mid \dot{\V} (\bx) = 0 \big\}.
\]
Therefore, for every species $X_i \in N_1$, we have $\lim\limits_{t\to\infty} x_i(t) = 0$.

Next, we claim that $(G, \bk)$ exhibits strong extinction for every species $X_i \in N \setminus N_1$ for which there exists a reaction $X_i \to X_j \in E$ with $X_j \in N_1$ (we denote this subset of species by $N_2$).
From \eqref{eq2:strong_extinction} and the fact that $D \subset \R^n_{\geq 0}$ is forward-invariant, $\V (\bx (t))$ is non-increasing with respect to $t$ and it is non-negative. Hence, we derive that
\[
\lim\limits_{t\to\infty} \V (\bx (t)) =
\lim\limits_{t\to\infty} \Big( \sum\limits_{X_i \in N} \bx_i \Big) = C \geq 0.
\]
Since $\lim\limits_{t\to\infty} x_i(t) = 0$ for every species $X_i \in N_1$, we further obtain
\begin{equation} \label{eq3:strong_extinction}
\lim\limits_{t\to\infty} \Big( \sum\limits_{X_i \in N \setminus N_1} \bx_i \Big) = C \geq 0,
\end{equation}
where $C$ is a constant.
We now consider the following function: 
\[
\V_1 (\bx) := \sum\limits_{X_i \in N \setminus N_1} \bx_i.
\]
Similar to $\V (\bx)$, we note that if $X_i \to X_j \in E$ with $X_i, X_j \in N \setminus N_1$, then the corresponding terms in $\frac{d \bx_i}{dt}$ and $\frac{d \bx_j}{dt}$ cancel each other in $\dot{\V_1}(\bx)$. 
This further implies that
\begin{equation} \label{eq4:strong_extinction}
\dot{\V_1} (\bx) = \sum\limits_{X_i\in N \setminus N_1} \sum\limits_{\substack{\text{$X_j \in N_1$} \\ \text{$X_i\to X_j\in E$}}} - k_{X_i\to X_j} x_i + \sum\limits_{X_l \in N_1} \sum\limits_{\substack{\text{$X_m \in N \setminus N_1$} \\ \text{$X_l \to X_m \in E$}}} k_{X_l \to X_m} x_l.
\end{equation}
From \eqref{eq3:strong_extinction}, the function $\V_1(\bx)$ converges to a finite limit as $t \to \infty$.
This, together with \eqref{eq4:strong_extinction}, further implies that the function $\V_1(\bx)$ is continuously differentiable and $\dot{\V}_1(\bx)$ is uniformly continuous. 
By Barbalat's Lemma~\ref{lem:barbalat}, it follows that 
\[
\lim\limits_{t\to\infty} \dot{\V}_1 (\bx) = \lim\limits_{t\to\infty} \left( \sum\limits_{X_i\in N \setminus N_1} \sum\limits_{\substack{\text{$X_j \in N_1$} \\ \text{$X_i\to X_j\in E$}}} - k_{X_i\to X_j} x_i + \sum\limits_{X_l \in N_1} \sum\limits_{\substack{\text{$X_m \in N \setminus N_1$} \\ \text{$X_l \to X_m \in E$}}} k_{X_l \to X_m} x_l \right) = 0.
\]
Since the species in $N_1$ exhibit strong extinction and $D \subset \R^n_{\geq 0}$ is forward-invariant, it follows that
$\lim\limits_{t\to\infty} x_i(t) = 0$ for all $X_i \in N \setminus N_1$ with $X_i \to X_j \in E$ and $X_j \in N_1$ (we denote this subset of species by $N_2$). This completes the proof of the claim.

Analogously, we consider the following function: 
\[
\V_2 (\bx) := \sum\limits_{X_i \in N \setminus \{ N_1 \cup N_2 \} } \bx_i.
\]
Following a similar argument, we obtain
\begin{equation} \notag
\dot{\V_2} (\bx) = \sum\limits_{X_i \in N \setminus \{ N_1 \cup N_2 \}} \sum\limits_{\substack{\text{$X_j \in N_2$} \\ \text{$X_i\to X_j\in E$}}} - k_{X_i\to X_j} x_i + \sum\limits_{X_l \in N_2} \sum\limits_{\substack{\text{$X_m \in N \setminus \{ N_1 \cup N_2 \}$} \\ \text{$X_l \to X_m \in E$}}} k_{X_l \to X_m} x_l,
\end{equation}
and we further derive that
\[
\lim\limits_{t\to\infty} x_i(t) = 0
\ \text{for all $X_i \in N \setminus \{ N_1 \cup N_2 \}$ with $X_i \to X_j \in E$ and $X_j \in N_2$}.
\]
Iterating this argument, we conclude that $(G, \bk)$ exhibits strong extinction for every species $X_i \in N$.
\end{proof}

\section{Discussion} \label{sec:discussion}

This work establishes a connection between the existence of Lyapunov functions and extinction in reaction networks. 
In contrast to the \emph{persistence conjecture}~\cite{anderson2011proof,craciun2013persistence,feinberg1987chemical,gopalkrishnan2014geometric}, the concept of extinction has received comparatively less attention.
We introduce two types of extinction: \emph{weak extinction} and \emph{strong extinction}, and relate them to the existence of Lyapunov functions via LaSalle’s invariance principle.
Our results show that trajectories of mass-action systems admitting strict Lyapunov functions converge to a unique nonnegative equilibrium. Moreover, we construct a linear Lyapunov function for deficiency-zero, non–weakly reversible networks and show that such systems exhibit weak extinction in at least one species when the stoichiometric compatibility class is bounded.
Finally, for a non–weakly reversible reaction network, we prove that strong extinction occurs in any species not belonging to a terminal strongly connected component when the network is first-order.

While beyond the scope of the present work, the framework developed here suggests several directions for future research.
For deficiency-zero systems that are not weakly reversible, it remains to determine conditions under which one can identify the species that undergo weak extinction. 
This involves understanding how features of the network structure or stoichiometric constraints influence which species approach extinction along trajectories. 
A closely related question is to characterize additional structural or dynamical assumptions under which such systems exhibit strong extinction in a specified species.

A second direction is to obtain a more complete characterization of the capacity for weak or strong extinction in reaction networks with low-dimensional stoichiometric subspaces, particularly in one or two dimensions. For example, in the two-dimensional setting, one may ask whether there exists a ``bad'' reaction that inevitably induces weak or strong extinction for some or all choices of rate constants and for some or all initial conditions.

\bibliographystyle{plain}
\bibliography{Bibliography}

@article{craciun2013persistence,
  title={Persistence and permanence of mass-action and power-law dynamical systems},
  author={Craciun, G. and Nazarov, F. and Pantea, C.},
  journal={SIAM J. Appl. Math.},
  volume={73},
  number={1},
  pages={305--329},
  year={2013},
  publisher={SIAM}
}

@article{gopalkrishnan2014geometric,
  title={A geometric approach to the global attractor conjecture},
  author={Gopalkrishnan, M. and Miller, E. and Shiu, A.},
  journal={SIAM J. Appl. Dyn. Syst.},
  volume={13},
  number={2},
  pages={758--797},
  year={2014},
  publisher={SIAM}
}

@article{horn1972general,
  title={General mass action kinetics},
  author={Horn, F. and Jackson, R.},
  journal={Arch. Ration. Mech. Anal.},
  volume={47},
  number={2},
  pages={81--116},
  year={1972},
  publisher={Springer}
}

@article{feinberg1979lectures,
  title={Lectures on chemical reaction networks},
  author={Feinberg, M.},
  journal={Notes of lectures given at the Mathematics Research Center, University of Wisconsin},
  pages={49},
  year={1979}
}

@article{voit2015150,
  title={150 years of the mass action law},
  author={Voit, E. and Martens, H. and Omholt, S.},
  journal={PLOS Comput. Biol.},
  volume={11},
  number={1},
  pages={e1004012},
  year={2015},
  publisher={Public Library of Science}
}

@article{yu2018mathematical,
  title={Mathematical {A}nalysis of {C}hemical {R}eaction {S}ystems},
  author={Yu, P. and Craciun, G.},
  journal={Isr. J. Chem.},
  volume={58},
  number={6-7},
  pages={733--741},
  year={2018},
  publisher={Wiley Online Library}
}

@article{gunawardena2003chemical,
  title={Chemical reaction network theory for in-silico biologists},
  author={Gunawardena, J.},
  journal={Notes available for download at http://vcp. med. harvard. edu/papers/crnt. pdf},
  year={2003}
}

@article{craciun2015toric,
  title={Toric differential inclusions and a proof of the global attractor conjecture},
  author={Craciun, G.},
  journal={arXiv preprint arXiv:1501.02860},
  year={2015}
}

@article{craciun2019polynomial,
  title={Polynomial dynamical systems, reaction networks, and toric differential inclusions},
  author={Craciun, G.},
  journal={SIAM J. Appl. Algebra Geom.},
  volume={3},
  number={1},
  pages={87--106},
  year={2019},
  publisher={SIAM}
}

@article{craciun2020endotactic,
  title={Endotactic networks and toric differential inclusions},
  author={Craciun, G. and Deshpande, A.},
  journal={SIAM J. Appl. Dyn. Syst.},
  volume={19},
  number={3},
  pages={1798--1822},
  year={2020},
  publisher={SIAM}
}

@article{craciun2019quasi,
title = {Quasi-toric differential inclusions},
author = {Craciun, C. and Deshpande, A. and Yeon, J.},
journal = {Discrete Continuous Dyn. Syst. Ser. B.},
volume = {26},
number = {5},
pages = {2343-2359},
year = {2021},
}

@book{khalil2002nonlinear,
  title={Nonlinear systems},
  author={Khalil, H. and Grizzle, J.},
  volume={3},
  year={2002},
  publisher={Prentice hall Upper Saddle River, NJ}
}

@book{feinberg2019foundations,
  title={Foundations of chemical reaction network theory},
  author={Feinberg, M.},
  year={2019},
  publisher={Springer}
}

@article{disg_3,
  title={The dimension of the disguised toric locus of a reaction network},
  author={Craciun, Gheorghe and Deshpande, Abhishek and Jin, Jiaxin},
  journal={Studies in Applied Mathematics},
  volume={154},
  number={6},
  pages={e70071},
  year={2025},
  publisher={Wiley Online Library}
}

@article{disg_4,
  title={The Computation of the Disguised Toric Locus of Reaction Networks},
  author={Craciun, G. and Deshpande, A. and Jin, J.},
  journal={arXiv preprint arXiv:2412.02620},
  year={2024}
}

@article{pantea2012persistence,
  title={On the persistence and global stability of mass-action systems},
  author={Pantea, C.},
  journal={SIAM J. Math. Anal.},
  volume={44},
  number={3},
  pages={1636--1673},
  year={2012},
  publisher={SIAM}
}

@article{ding2022minimal,
  title={Minimal invariant regions and minimal globally attracting regions for toric differential inclusions},
  author={Ding, Y. and Deshpande, A. and Craciun, G.},
  journal={Adv. Appl. Math.},
  volume={136},
  pages={102307},
  year={2022},
  publisher={Elsevier}
}

@article{ding2021minimal,
  title={Minimal invariant regions and minimal globally attracting regions for variable-k reaction systems},
  author={Ding, Y. and Deshpande, A. and Craciun, G.},
  journal={Discrete Contin. Dyn. Syst. - B},
  volume={28},
  number = {3},
  pages={696-1718},
  year={2023},
  publisher={Southwest Missouri State University}
}

@article{anderson2011proof,
  title={A proof of the global attractor conjecture in the single linkage class case},
  author={Anderson, D.},
  journal={SIAM J. Appl. Math.},
  volume={71},
  number={4},
  pages={1487--1508},
  year={2011},
  publisher={SIAM}
}

@article{feinberg1977chemical,
  title={Chemical mechanism structure and the coincidence of the stoichiometric and kinetic subspaces},
  author={Feinberg, M. and Horn, F.},
  journal={Arch. Ration. Mech. Anal.},
  volume={66},
  number={1},
  pages={83--97},
  year={1977},
  publisher={Springer}
}

@article{feinberg1987chemical,
  title={Chemical reaction network structure and the stability of complex isothermal reactors - {I}. The deficiency zero and deficiency one theorems},
  author={Feinberg, M.},
  journal={Chem. Eng. Sci.},
  volume={42},
  number={10},
  pages={2229--2268},
  year={1987},
  publisher={Elsevier}
}

@article{craciun2019realizations,
  title={Realizations of kinetic differential equations},
  author={Craciun, G. and Johnston, M. and Szederk{\'e}nyi, G. and Tonello, E. and T{\'o}th, J. and Yu, P.},
  journal={arXiv preprint arXiv:1907.07266},
  year={2019}
}

@article{deshpande2022source,
  title={Source-only realizations, weakly reversible deficiency one networks, and dynamical equivalence},
  author={Deshpande, A.},
  journal={SIAM J. Appl. Dyn.},
  volume={22},
  number={2},
  pages={1502--1521},
  year={2023},
  publisher={SIAM}
}

@article{boros2019existence,
  title={Existence of positive steady states for weakly reversible mass-action systems},
  author={Boros, B.},
  journal={SIAM J. Math. Anal.},
  volume={51},
  number={1},
  pages={435--449},
  year={2019},
  publisher={SIAM}
}

@article{disg_1,
  title={On the connectivity of the disguised toric locus of a reaction network},
  author={Craciun, G. and Deshpande, A. and Jin, J.},
  journal={J. Math. Chem.},
  volume={62},
  number={2},
  pages={386--405},
  year={2024},
  publisher={Springer}
}

@article{disg_2,
  title={A Lower Bound on the Dimension of the-Disguised Toric Locus of a Reaction Network},
  author={Craciun, G. and Deshpande, A. and Jin, J.},
  journal={SIAM J. Appl. Algebra Geom.},
  volume={8},
  number={4},
  pages={877--912},
  year={2024},
  publisher={SIAM}
}

@article{CraciunDickensteinShiuSturmfels2009,
	author = {Craciun, G. and Dickenstein, A. and Shiu, A. and Sturmfels, B.},
	title = {Toric Dynamical Systems},
    journal = {J. Symbolic Comput.},
	volume = {44},
	number = {11},
	pages = {1551--1565},
	year = {2009},
}

@article{sontag2001structure,
  title={Structure and stability of certain chemical networks and applications to the kinetic proofreading model of T-cell receptor signal transduction},
  author={Sontag, E.},
  journal={IEEE Trans. Automat.},
  volume={46},
  number={7},
  pages={1028--1047},
  year={2001},
  publisher={IEEE}
}

@article{may1972stable,
author={May, R.},
title={Will a Large Complex System be Stable?},
journal={Nature},
year={1972},
month={Aug},
day={01},
volume={238},
number={5364},
pages={413-414},
issn={1476-4687},
}

@article{johnston2018conditions,
  title={Conditions for extinction events in chemical reaction networks with discrete state spaces},
  author={Johnston, M. and Anderson, D. and Craciun, G. and Brijder, R.},
  journal={J. Math. Biol.},
  volume={76},
  number={6},
  pages={1535--1558},
  year={2018},
  publisher={Springer}
}

@article{johnston2017computational,
  title={A computational approach to extinction events in chemical reaction networks with discrete state spaces},
  author={Johnston, M.},
  journal={Mathematical Biosciences},
  volume={294},
  pages={130--142},
  year={2017},
  publisher={Elsevier}
}

@book{Khalil2017,
    author    = {Khalil, Hassan K.},
    title     = {Nonlinear Systems},
    edition   = {Fourth},
    publisher = {Pearson},
    year      = {2017},
    address   = {Hoboken, NJ}
}

@book{Rockafellar1970,
    author    = {Rockafellar, R. Tyrrell},
    title     = {Convex Analysis},
    publisher = {Princeton University Press},
    year      = {1970},
    address   = {Princeton, NJ},
    series    = {Princeton Mathematical Series},
    volume    = {28}
}

@article{horn1974,
  title={The dynamics of open reaction systems: mathematical aspects of chemical and biochemical problems and quantum chemistry},
  author={Horn, F.},
  journal={SIAM-AMS Proceedings},
  volume={VIII},
  pages={125--137},
  year={1974}
}

@article{dowdle1998principles,
  title={The principles of disease elimination and eradication},
  author={Dowdle, W.},
  journal={Bull. WHO},
  volume={76},
  pages={22--25},
  year={1998},
  publisher={WORLD HEALTH ORGANISATION}
}

@article{henderson2013lessons,
  title={Lessons from the eradication of smallpox: an interview with DA Henderson},
  author={Henderson, D. and Klepac, P.},
  journal={Philos. Trans. R. Soc. B, Biol. Sci.},
  volume={368},
  number={1623},
  pages={20130113},
  year={2013},
  publisher={The Royal Society}
}

@misc{bidlack2002casarett,
  title={Casarett \& {D}oull’s toxicology: the basic science of poisons},
  author={Bidlack, W.},
  year={2002},
  publisher={Taylor \& Francis}
}

@book{goodman1996goodman,
  title={Goodman and Gilman's the pharmacological basis of therapeutics},
  author={Goodman, L.},
  volume={1549},
  year={1996},
  publisher={McGraw-Hill New York}
}

@article{rowland2011clinical,
  title={Clinical pharmacokinetics and pharmacodynamics: concepts and applications},
  author={Rowland, M. and Tozer, T.},
  journal={(No Title)},
  year={2011}
}

\end{document}